\newtheorem{theorem}{Theorem}[section]
\newtheorem{definition}[theorem]{Definition}
\newtheorem{lemma}[theorem]{Lemma}
\newtheorem{proposition}[theorem]{Proposition}
\newtheorem{remark1}[theorem]{Remark}
\newtheorem{corollary}[theorem]{Corollary}
\def\C{\mathbb{C}}
\def\Z{\mathbb{Z}}
\newcommand{\cat}[1]{\textbf{#1}}
\newcommand{\rmod}{\cat{R-mod}}
\newcommand{\amod}{\cat{A-mod}}
\newcommand{\cobtwo}{\cat{2Cob}}
\newcommand{\od}{
\begin{picture}(12, 12) (0,0)
\put(5,-5){\line(0,1){20}}
\put(5,5){\circle*{5}}
\end{picture}}
\newcommand{\nd}{
\begin{picture}(12, 12) (0,0)
\put(5,-5){\line(0,1){20}}
\end{picture}}
\newcommand{\td}{
\begin{picture}(12, 12) (0,0)
\put(5,-5){\line(0,1){20}}
\put(5,8){\circle*{5}}
\put(5,2){\circle*{5}}
\end{picture}}
\title{Generalized Skein Modules of Surfaces}
\author{Jeffrey Boerner\\jboerner@math.uiowa.edu\\ \\Paul Drube\\pdrube@math.uiowa.edu}
\begin{document}

\maketitle

\abstract{Frobenius extensions play a central role in the link homology theories based upon the $sl(n)$ link variants, and each of these Frobenius extensions may be recast geometrically via a category of marked cobordisms in the manner of Bar-Natan.  Here we explore a large family of such marked cobordism categories that are relevant to generalized $sl(n)$ link homology theories.  We also investigate the skein modules that result from embedding these marked cobordisms within 3-manifolds, and arrive at an explicit presentation for several of these generalized skein modules.}

\section{Introduction}

In \cite{khovanov1}, Mikhail Khovanov developed a link homology theory categorifying the Jones polynomial.  That homology theory utilized a particular rank-2 Frobenius extension to define the boundary operator of its chain complex.  Utilizing the well-known correspondence between Frobenius extensions and 2-D Topological Quantum Field Theories (TQFTs), Dror Bar-Natan presented an equivalent development of Khovanov's homology that made use of a category of ``marked cobordisms" (\cite{barnatan}).  This marked cobordism category was taken modulo three sets of local relations that recast algebraic properties of Khovanov's Frobenius extension in geometric terms.  Khovanov proceeded to develop a second major link homology in \cite{Khovanov2}- an $sl(3)$ link homology that made use of a rank-3 Frobenius extension.  In \cite{mackaay}, Marco Mackaay and Pedro Vaz extended his result using a family of ``universal $sl(3)$ Frobenius extensions", and along the way mimicked Bar-Natan's marked cobordism category as part of their category of foams.  One intention of this paper is to generalize these marked cobordism constructions to Frobenius extensions of all ranks $n \geq 2$.

In \cite{asaeda}, Marta Asaeda and Charles Frohman introduced the notion of embedding Bar-Natan's marked cobordisms within a 3-manifold, thus producing skein modules. They defined and explored Bar-Natan's original skein module and gave explicit presentations of that skein module for several simple 3-manifolds.  Uwe Kaiser made these ideas rigorous in \cite{kaiser}, developing skein modules based on TQFTs from any Frobenius extension.  His work gives us a multitude of skein modules to investigate, and in this paper we closely examine a large family of Frobenius extensions that generalize those extensions associated to the $sl(n)$ link homology theories.  Our work requires us to prove a number of foundational results about Frobenius extensions that do not seem to appear anywhere else in the literature.  In particular, a significantly generalized version of Bar-Natan's original ``neck-cutting relation" is investigated, especially as it relates to the evaluation of closed manifolds in a skein module.

This paper is structured as follows:  In Section \ref{sec: towards skein modules} we review the underpinnings of Frobenius extensions and 2-D TQFTs, culminating in a detailed description of Bar-Natan's  original category of marked cobordisms.  Section \ref{sec: skein modules} generalizes this category to the class of Frobenius extensions in question, while Section \ref{sec: properties of skein modules} is concerned largely with neck-cutting in these extensions and concludes with a major theorem regarding the evaluation of closed compact surfaces.  In Section \ref{sec: embedded skein modules} we finally arrive at skein modules, proving several general facts about the entire family of skein modules (with special emphasis on rank $n=2$ Frobenius extensions).  We also thoroughly compute an example, in part to demonstrate how complex these skein modules become without certain simplifying assumptions about the underlying Frobenius extensions.  The appendix tackles some of the difficult computational challenges revealed in Sections \ref{sec: skein modules} and \ref{sec: properties of skein modules} via linear algebra.  Although intended largely as a curiosity, this appendix is interesting in that it betrays a deep indebtedness to the theory of symmetric polynomials. 

\section{Towards Skein Modules: Frobenius Extensions \& 2-D TQFTs}
\label{sec: towards skein modules}

\subsection{Frobenius Extensions}
\label{subsec: frobenius extensions}

As in \cite{Khovanov3}, we begin with a ring extension $\iota: R \hookrightarrow A$ of commutative rings with $1$ such that $\iota(1)=1$.  $\iota$ endows $A$ with the structure of a $R$-bimodule, allowing for an obvious restriction functor $R: \amod \rightarrow \rmod$.  By definition, $\iota$ is a Frobenius extension if this functor $R$ has a two-sided adjoint.  More specifically, $\iota$ is Frobenius if the induction functor $T: M_R \mapsto (M \otimes_R A)_A$ and the coinduction functor $H: M_R \mapsto (Hom_R(A,M))_A$ are isomorphic as functors $T,H: \rmod \rightarrow \amod$.  For the remainder of this paper we will treat Frobenius extensions such that $A$ is finitely-generated and projective as an $R$-module.

The functor isomorphism above prompts A-linear isomorphisms $End_R(A) \cong A \otimes_R A$ and $A^* \cong A$ (corresponding, respectively, to $M=A$ and $M=R$).  In the finite projective case, the latter of those A-linear isomorphisms leads to the alternative definition of a Frobenius extension as a ring extension such that $A$ is self-dual as a $R$-module.  Equivalently, a finite projective Frobenius extension is a ring extension such that $A$ is equipped with an $R$-linear comultiplication map $\Delta : A \rightarrow A \otimes_R A$ that is coassociative, cocommutative, and in possession of an $R$-linear counit map $\varepsilon: A \rightarrow R$.  Stated below is yet another equivalent formulation of Frobenius extension that will be the primary definition utilized here.  For a detailed proof of the equivalence between these definitions see Kadison \cite{Kadison}, where they deal with the more general case of $R,A$ not necessarily commutative.

\begin{definition}
\label{def: Frobenius extension}
A {\bf Frobenius extension} is a finite projective ring extension $\iota: R \hookrightarrow A$ of commutative rings such that there exists a non-degenerate $R$-linear map $\varepsilon: A \rightarrow R$ and a collection of tuples $(x_i,y_i) \in A \times A$ such that, for all $a \in A$, $a = \displaystyle{\sum_i} x_i \varepsilon(y_i a) = \displaystyle{\sum_i} \varepsilon(a x_i) y_i$.
\end{definition}

$\varepsilon$ is known as the Frobenius map, or trace, and is identified with the counit map mentioned above.  For a Frobenius form to be nondegenerate means that there are no (principal) ideals in the nullspace of $\varepsilon$.  $(x_i,y_i)$ is referred to as the dual basis, with the duality condition taking the form $\varepsilon(x_i y_j) = \delta_{i,j}$.  $(R,A,\varepsilon, (x_i,y_i))$ is collectively referred to as the Frobenius system, and offers a complete description of the Frobenius extension.

One important feature of Frobenius extensions is that the aforementioned self-duality $A \cong A^*$ prompts an R-module isomorphism $A \otimes A \cong A \otimes A^* \cong End(A)$ that is given by $a \otimes b \mapsto a \varepsilon(b \underline{\ \ })$.  This map actually extends to an isomorphism of the underlying rings, as long as one defines a multiplication on $A \otimes A$ (known as the $\varepsilon$-multiplication) by $(a \otimes b)(a' \otimes b')=a \varepsilon(ba') \otimes b' = a \otimes \varepsilon(ba')b'$.  Note that, via the fundamental property of the Frobenius form in Definition \ref{def: Frobenius extension}, $\sum (x_i \otimes y_i)$ serves as the unit in this multiplication.  For full details of this construction, see \cite{Kadison}.

For a fixed base ring $R$, the fundamental notion of equivalence between two Frobenius systems $(R,A,\varepsilon, (x_i,y_i))$ and $(R,\tilde{A},\tilde{\varepsilon}, (\tilde{x}_i,\tilde{y}_i))$ is known as Frobenius isomorphism.  A Frobenius isomorphism is any $R$-linear ring isomorphism $\phi: A \rightarrow \tilde{A}$ such that $\varepsilon = \tilde{\varepsilon} \phi$, with the latter condition implying an isomorphism between the comodule structures of $A$ and $\tilde{A}$.  When we also have that $A = \tilde{A}$, Kadison alternatively states that two systems are equivalent iff $\varepsilon = \tilde{\varepsilon}$, which occurs iff $\sum (x_i \otimes y_i) = \sum (\tilde{x}_i \otimes \tilde{y}_i)$.  The second iff above follows from the fact that both sums serve as the unit element for the $\varepsilon$-multiplication.  This final observation will prove especially significant in Section \ref{sec: properties of skein modules}.  In Section \ref{sec: properties of skein modules}, we'll also see how these two distinct notions of equivalence coincide when $A = \tilde{A}$.

\subsection{2-D TQFTs}
\label{subsec: TQFTs and Skein Modules}

There is a well-known correspondence between Frobenius extensions $R \hookrightarrow A$ and 2-dimensional TQFTs over $R$.  See \cite{atiyah} or \cite{Kock} for a very detailed discussion of this correspondence in the less general setting of ``Frobenius algebras"- where $R$ is a field.  Very briefly, a 2-dimensional TQFT $Z$ is a symmetric monoidal functor from the category of oriented 2-dimensional cobordisms $\cobtwo$ to the category of (left) $R$-modules $\rmod$.  On the object level, $Z$ sends $S^1$ to an $R$-module $A$ that necessarily acts as a Frobenius extension of $R$.  Being a symmetric monoidal functor, $Z$ then sends $n$ disjoint copies of $S^1$ to the tensor product $A^{\otimes_n}$ and the empty 1-manifold to the base ring $R$.

On the morphism level, $Z$ sends a 2-dimensional cobordism $N$ between $X$ and $X'$ to an $R$-linear map $Z(N):Z(X) \rightarrow Z(X')$.  In the case of a closed 2-manifold we have $X=X'=\emptyset$ and hence that $Z(N) \in End_R(R)$.  We identify this map with an element of $R$ via the image of $1 \in R$, meaning that $Z$ determines an $R$-valued invariant of closed 2-manifolds.  This final property will be a central component in the upcoming discussion.

In recent years, Frobenius extensions have seen widespread usage in the construction of link invariants.  This dates back to Khovanov's work in \cite{khovanov1}, where he utilized a Frobenius extension to construct the boundary operator in his homology theory that categorified the Jones polynomial.  The Frobenius extension used by Khovanov was $R=\Z$, $A=\Z[x]/(x^2)$, with Frobenius form defined on the $R$-module basis $\lbrace 1,x \rbrace$ by $\varepsilon(1)=0,\varepsilon(x)=1$.  The resulting dual-basis for this extension was then $\lbrace (1,x),(x,1)\rbrace$.

In \cite{barnatan}, Dror Ban-Natan utilized the correspondence between 2-D TQFTs and Frobenius extensions to give a more geometric interpretation of Khovanov's homology.  His primary construction was a category of ``decorated cobordisms", which utilized the associated TQFT to represent algebraic properties of the Frobenius extension via 2-dimensional surfaces.  This category $\cobtwo_A$ has the same objects as $\cobtwo$, but its morphisms (2-D cobordisms) may now by ``marked" by elements of $A$ (where a ``marking" appears as an element of $A$ written on the desired component, and an ``unmarked" surface corresponds to a marking by $1 \in A$).  Markings are allowed to ``move around" and to be multiplied together (or factored) within a fixed component, but are not allowed to ``jump" across to a distinct components of the same cobordism.  The category is also taken to be $R$-linear, and as a convention we always write elements of $R \subseteq A$ ``in front" of cobordisms to emphasize this linear structure.

The morphisms of $\cobtwo_A$ are then taken modulo three sets of local relations $l$ that actually encapsulate the algebraic information about $R \hookrightarrow A$, and it is the resulting quotient category $\cobtwo_A / l$ that is actually of interest.  As we will soon be generalizing this construction to more general Frobenius extensions, these local relations are described in detail below.  For the original description of these relations, see \cite{barnatan}\\
\\
\underline{Sphere Relations}\\
In the previous subsection we mentioned how $\varepsilon:A \rightarrow R$ is identified with the counit map.  In $\cobtwo$ this counit takes the form of the ``cap" surface.  Using marked surfaces allows us to graphically depict the value of $\varepsilon$ at a specific value $a \in A$ via precomposition of this ``cap" with a ``cup" decorated by $a$ (this ``cup" is nothing more than the unit map $u:R \rightarrow A$, so we are actually interpreting a marked ``cup" as $a=a*u(1)$).

This all gives rise to Bar Natan's ``sphere relations", which correspond to the function values $\varepsilon(1) = 0$ and $\varepsilon(x) = 1$.  They say that we may remove a (disjoint) unmarked sphere from a cobordism and multiply the entire cobordism by $0$, or remove a sphere marked by $x$ and multiply the entire cobordism by $1$.  Note that, merely for this sub-section, we adopt Bar Natan's original notation of a dot corresponding to a surface marked by $x$.

\[
\begin{picture}(40,30)
\raisebox{30pt}{\scalebox{.15}{\includegraphics[angle=270]{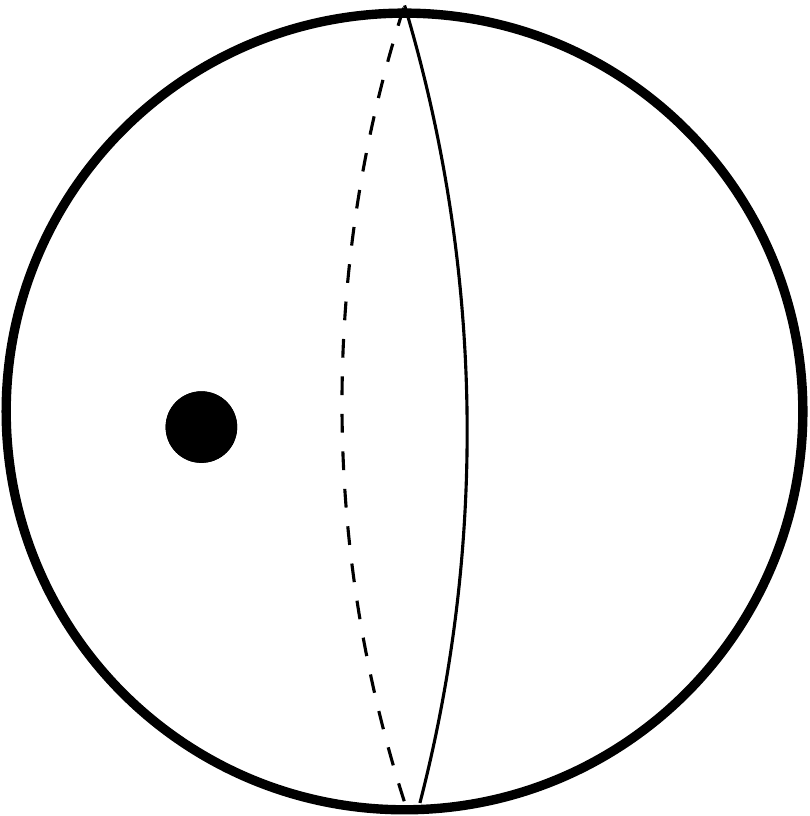}}}
\end{picture}
\raisebox{10pt}{\ \ = \ \ 1 \ \ \ \ \ \ } 
\begin{picture}(40,30)
\raisebox{30pt}{\scalebox{.15}{\includegraphics[angle=270]{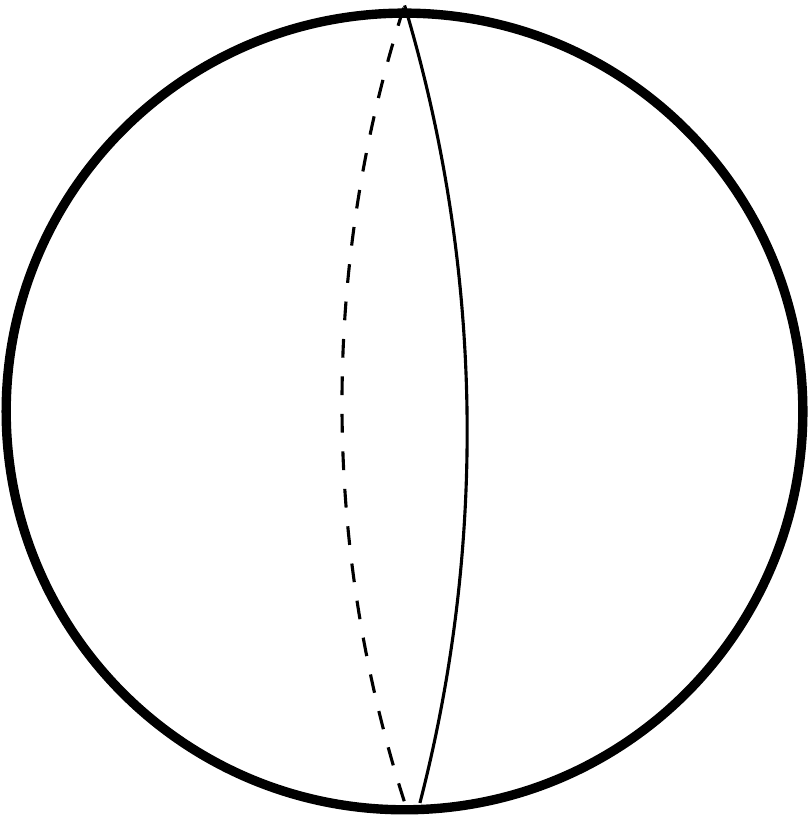}}}
\put(-20,18){\fontsize{9}{10.8}$1$}
\end{picture}
\raisebox{10pt}{\ \ = \ \ 0}
\]
\\
\underline{``Dot Reduction'' Relation}\\
The ``dot reduction" relation follows directly to our choice of $A$, and allows us to re-decorate surfaces by equivalent elements in $A$.  As the $R$-module $A$ has a single generating relation in $p(x)=x^2=0$, all such local relations are generated by the one shown below.

\[
\begin{picture}(40,30)
\raisebox{30pt}{\scalebox{.12}{\includegraphics[angle=270]{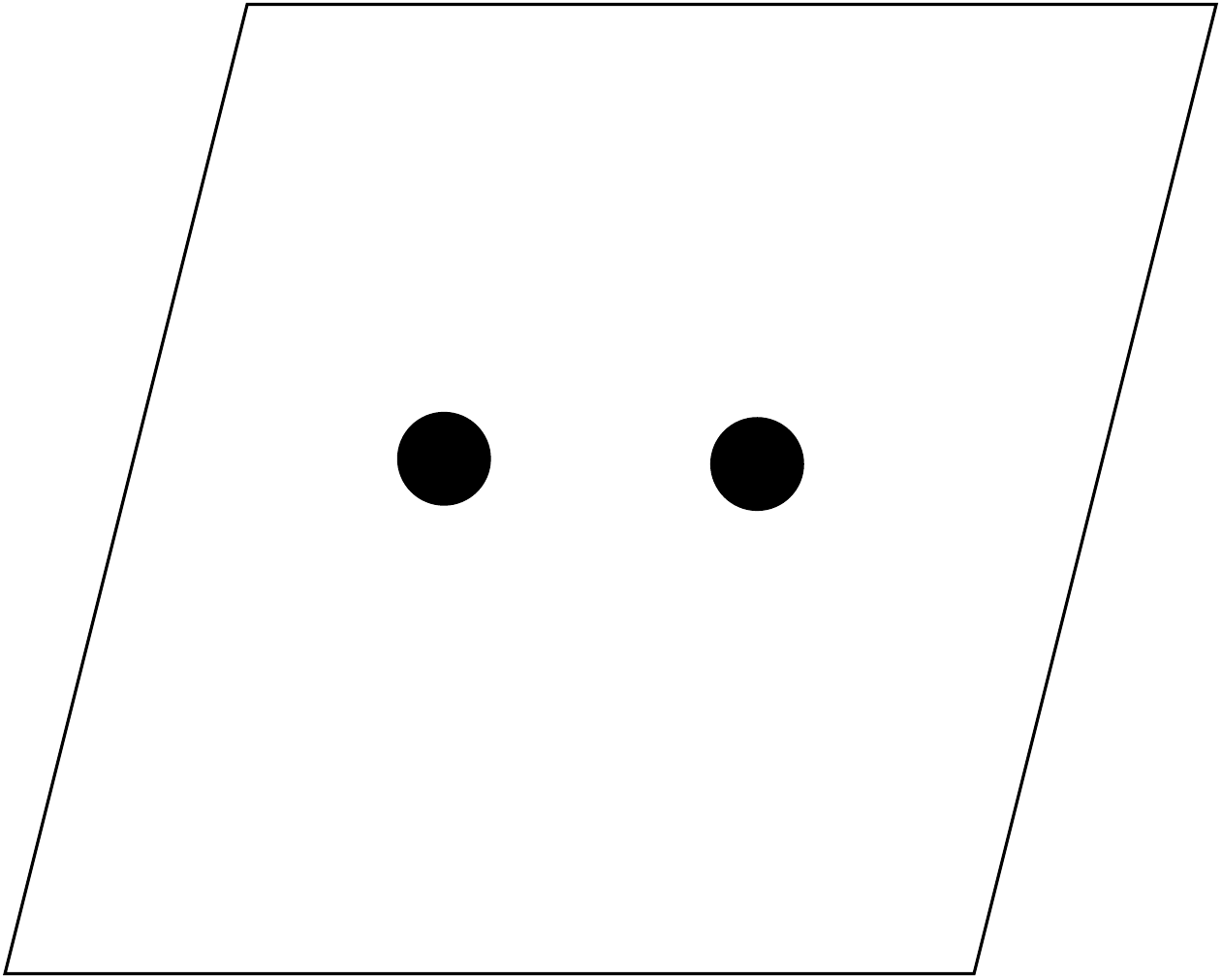}}}
\end{picture}
\raisebox{10pt}{\ \ = \ \ 0}
\]
\\
\underline{Neck-Cutting Relation}\\
The last relation can be applied to any surface $N$ with a compression disk- a copy of $D^2$ such that $\partial D^2 \subset N$ and the interior of $D^2$ is disjoint from $N$.  It allows one to ``neck-cut" along the compression disk, and then replace $N$ with a sum of surfaces in which a regular neighborhood of $D^2 \cap N$ has been removed and replaced by two copies of $D^2$ along the two new boundary components.  For Bar Natan's category $\cobtwo_A$, this ``neck-cutting" relation is depicted below.

Algebraically, this relation follows from our choice of dual-basis $\lbrace(1,x),(x,1)\rbrace$.  In particular, it is a result of the dual basis' non-degeneracy condition that $a = \sum_{i=1}^{n} \varepsilon (a x_i) y_i$ for all $a \in A$, which we interpret as an equality between two endomorphisms of $A$.  The left-hand side of the equation sends $a \in A$ to $a$ and hence is identified with the identity map $1_A: A \rightarrow A$: the image of a straight cylinder via our TQFT.  The endomorphism on the right applies $\varepsilon (x_i \underline{\ \ })$ to the input (corresponding to the ``cap" decorated by $x_i$) and then outputs $y_i = y_i u(1)$ (corresponding to the ``cup" decorated by $y_i$).

\[
\raisebox{29pt}{\scalebox{.2}{\includegraphics[angle=270]{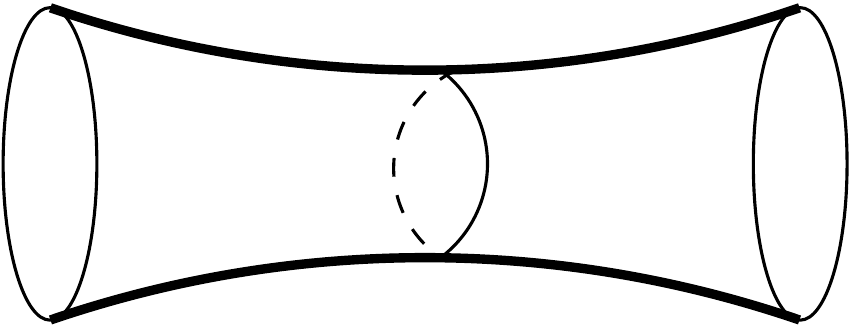}}}
\ \ = \ \ 
\raisebox{29pt}{\scalebox{.2}{\includegraphics[angle=270]{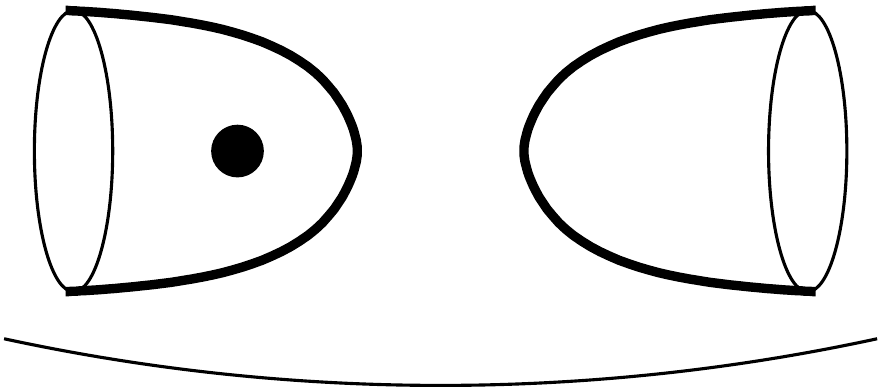}}}
\ + \ 
\raisebox{29pt}{\scalebox{.2}{\includegraphics[angle=270]{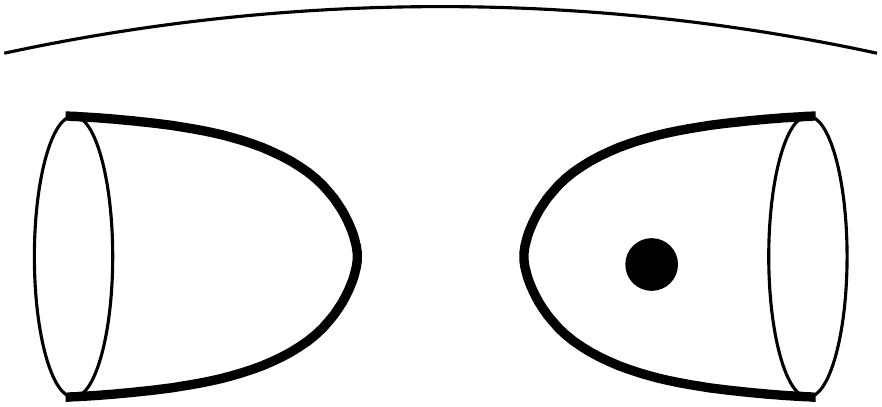}}}
\]
\\
In practice, the 2-D surfaces of interest will be closed.  If we view such surfaces abstractly (i.e.- not embedded within some higher-dimensional space), we can use the local relations above to compress all surfaces down to $R$-linear combinations of spheres that may then be evaluated to a constant in $R$.  The nice thing is that, since our local relations were determined by our Frobenius extension, when we restrict ourselves to unmarked surfaces these constants are identical to those of the $R$-valued 2-manifold invariant of the associated 2-D TQFT.

The concerns of this paper will be more general in several respects.  Most fundamentally, we will consider skein modules, in which our marked cobordisms of $\cobtwo_A/l$  will actually be embedded within a 3-manifold.  This situation was first examined by Frohman and Asaeda in \cite{asaeda}, and complicates the theory in that the topology of the chosen 3-manifold may prevent us from compressing all surfaces down to disjoint spheres.  Such embedded skein modules will be the primary focus of Section \ref{sec: embedded skein modules}.

Before considering such skein modules, it will be our primary goal to generalize the category $\cobtwo_A/l$ presented above by considering more general Frobenius extensions $R \hookrightarrow A$.  When $A$ is a rank 2 $R$-module, a number of different Frobenius extensions have been used to produce interesting link homologies (\cite{khovanov1},\cite{lee}), and in \cite{Khovanov3} Khovanov has even described a ``universal" rank-2 Frobenius extension that nicely encapsulates all of the distinct theories (although none of this is done within the framework of cobordisms).  We are more strongly motivated by the work Mackaay and Vaz in their construction of a universal ``$sl(3)$-link homology" (\cite{mackaay}), in which they introduced a set of local relations similar to those above as part of their category \cat{Foam}.

The ``universal $sl(3)$ Frobenius extension" utilized by Mackaay and Vaz was $R=\C$, $A=\C[x]/(x^3-a x^2 - b x -c)$, with Frobenius form defined on the basis $\lbrace 1,x,x^2 \rbrace$ by $\varepsilon(1)=\varepsilon(x)=0,\varepsilon(x^2)=1$.  Notice that this extension is not completely general, in the sense that it doesn't incorporate all possible rank-3 extensions of $\C$ (only ones with a cyclic basis and with a very specific type of Frobenius form).  Coming up with a truly ``universal" extension is only a tractable problem in the rank-2 case, where there is always a presentation of the rank-2 $R$-module $A$ of the form $A=R[x]/p(x)$ for some quadratic $p(x) \in R[x]$.  Thus in generalizing to arbitrarily high ranks $n \geq 2$, we chose to restrict our attention to a specific family of Frobenius extensions that most closely mimic this ``universal $sl(3)$ extension".  These ``$sl(n)$ Frobenius extensions" are the subject of Sections \ref{sec: skein modules} and \ref{sec: properties of skein modules}.

\section{$sl(n)$ Frobenius extensions}
\label{sec: skein modules}

The rank-n Frobenius extensions $R \hookrightarrow A$ that we consider in this paper will be of the form $A=R[x]/p(x)$, where $p(x) \in R[x]$ is a (monic) degree-$n$ polynomial, and we have Frobenius form defined on the standard basis $\lbrace 1,x,...,x^{n-1} \rbrace$ by $\varepsilon(1)=...\varepsilon(x^{n-2})=0,\varepsilon(x^{n-1})=1$.  To standardize notation, let $p(x)=x^n - a_1 x^{n-1} - ... - a_n$.  From now on we refer to these systems as universal $sl(n)$ Frobenius extensions.

Notice that, in the case of $n=3$ and $R=\C$, this extension coincides with the universal $sl(3)$ extension of Mackaay and Vaz.  As opposed to the ``easy choice" of $R=\C$, in the spirit of Khovanov we will choose to work with $R=\Z[a_1,...,a_n]$, a more general setting that will make several of our proofs slightly more involved.  Always working with the standard basis $\lbrace 1,x,...,x^{n-1} \rbrace$, we endow our rank-n system with a standard dual-basis as below:

\begin{lemma}
\label{thm: dual basis}
The rank-n Frobenius system defined above has a dual basis given by:

\vspace{-.2 in}

\begin{equation*}
 \{ (x^{n-1}, 1), (x^{n-2}, x - a_1), (x^{n-3}, x^2 - a_1 x - a_2), \ldots, (1, x^{n-1} - a_1 x^{n-2} - \ldots - a_{n-2} x - a_{n-1}) \}.
 \end{equation*}
 
\end{lemma}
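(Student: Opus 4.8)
The claim is that the given tuples $(x^{n-i}, q_{i-1}(x))$, where $q_j(x) = x^j - a_1 x^{j-1} - \cdots - a_j$ (so $q_0 = 1$), form a dual basis for the $sl(n)$ Frobenius system. By Definition~\ref{def: Frobenius extension}, the one thing I need to verify is that $a = \sum_i x_i\,\varepsilon(y_i a)$ for all $a \in A$ (the symmetric identity $a = \sum_i \varepsilon(a x_i) y_i$ then follows by the same computation since $R$ is commutative, or can be checked the same way). By $R$-linearity it suffices to check this on the standard basis elements $a = x^k$ for $0 \le k \le n-1$; equivalently, it suffices to verify the duality relations $\varepsilon(x_i\, y_j) = \delta_{ij}$, since the $x_i = x^{n-i}$ visibly span $A$ and there are exactly $n$ of them. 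I would state it in the latter form: writing the pairs as $(x^{n-i}, q_{i-1})$ for $i = 1, \ldots, n$, I must show $\varepsilon\big(x^{n-i} q_{j-1}(x)\big) = \delta_{ij}$.

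**Key steps.** First I would record the crucial recursion coming from the defining relation $x^n = a_1 x^{n-1} + \cdots + a_n$ in $A$: namely $x \cdot q_j(x) = q_{j+1}(x) + a_{j+1}$ as elements of $R[x]$ for $j < n-1$, and more to the point $x^n - q_n(x)$... — actually the cleaner fact is that $q_n(x) = p(x) \equiv 0$ in $A$, and $x \cdot q_{n-1}(x) = q_n(x) + a_n = a_n$ in $A$. The polynomials $q_j$ are exactly the ``partial quotients'' you get from dividing powers of $x$ by $p(x)$. The key observation I would isolate as a sub-lemma: for $0 \le m \le n-1$, $\varepsilon(x^m q_j(x)) $ behaves simply. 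In fact I claim $x^{n-1-j} q_j(x) \equiv x^{n-1} \pmod{\text{lower-degree terms killed by }\varepsilon}$ — more precisely, $x^{n-1-j} q_j(x) = x^{n-1} - a_1 x^{n-2} - \cdots - a_j x^{n-1-j}$ has leading term $x^{n-1}$ and all other terms of degree $< n-1$, hence $\varepsilon(x^{n-1-j} q_j(x)) = 1$, giving the diagonal entries $i = j$. For the off-diagonal entries I split into two cases. When $n-i + (j-1) < n-1$, i.e. the product $x^{n-i} q_{j-1}(x)$ has degree $< n-1$ before any reduction, every monomial appearing has degree $\le n-2$, so $\varepsilon$ kills it: this handles $i > j$. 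The genuinely substantive case is $i < j$, where $x^{n-i} q_{j-1}(x)$ has degree $> n-1$ and I must reduce modulo $p(x)$ first; here I would use the recursion $x \cdot q_{j-1}(x) = q_j(x) + a_j$ repeatedly (valid in $A$, using $q_n \equiv 0$ when the index reaches $n$) to show $x^{n-i} q_{j-1}(x) \equiv (\text{lower order})$, so again $\varepsilon$ returns $0$.

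**Main obstacle.** The one step requiring care is the case $i < j$: unwinding $x^{n-i} q_{j-1}(x)$ in $A = R[x]/p(x)$. The clean way is an induction showing that in $A$ one has $x^{m} q_j(x) = q_{j+m}(x)$ whenever $j + m \le n$ (interpreting $q_n = p \equiv 0$), and $x^{m} q_j(x) = 0$ in $A$ once $j + m = n$; then for $i < j$, writing $m = n - i$ and the polynomial index $j - 1$, we get $m + (j-1) = n - i + j - 1 \ge n$ (since $j \ge i+1$), so the product collapses to something of degree $< n - 1$ and is annihilated by $\varepsilon$. Establishing $x^m q_j = q_{j+m}$ in $A$ reduces, by induction on $m$, to the single-step identity $x\, q_j(x) = q_{j+1}(x) + a_{j+1}$ in $R[x]$ together with the observation that when $j = n-1$ the extra term $q_n(x) = p(x)$ vanishes in $A$ — so $x\, q_{n-1}(x) = a_n$ in $A$, and one more multiplication by $x$ is needed to see degrees drop below $n-1$; I would track this bookkeeping explicitly. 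Everything else is routine degree-counting against the definition $\varepsilon(1) = \cdots = \varepsilon(x^{n-2}) = 0$, $\varepsilon(x^{n-1}) = 1$.
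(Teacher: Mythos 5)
Your approach is genuinely different from the paper's. The paper's proof (such as it is) computes the Frobenius matrix $\lambda = \left[\left[\varepsilon(x^{i+j-2})\right]\right]$ and reads the dual basis off the rows of a stated $\lambda^{-1}$; you instead verify the duality relations $\varepsilon(x^{n-i}\,q_{j-1}(x)) = \delta_{ij}$ directly. That is a perfectly legitimate alternative, and arguably more self-contained, since it amounts to verifying the matrix identity the paper merely asserts. Your diagonal ($i=j$) and $i>j$ cases are handled correctly by degree counting.

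There is, however, a concrete error in your proposed ``clean'' sub-lemma for the $i<j$ case. It is false in $A$ that $x^m q_j(x) = q_{j+m}(x)$ when $j+m \le n$, and also false that $x^m q_j(x) = 0$ once $j+m = n$: already for $n=3$, $j=0$, $m=1$ one has $x\,q_0 = x$ while $q_1 = x - a_1$, and $x^3 q_0 = a_1x^2 + a_2x + a_3 \neq 0$. Your own single-step identity $x\,q_j = q_{j+1} + a_{j+1}$ carries a correction term, and iterating gives
\begin{equation*}
x^m q_j(x) \;=\; q_{j+m}(x) + \sum_{l=0}^{m-1} a_{j+m-l}\,x^l \qquad \text{in } R[x],\ j+m \le n,
\end{equation*}
so at $j+m=n$ one gets $x^m q_j \equiv \sum_{l=0}^{m-1} a_{n-l}x^l$ in $A$, which is nonzero in general but has degree $\le m-1$. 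For $i<j$, set $m = n-j+1$ and $s = j-i-1 \ge 0$; then $x^{n-i}q_{j-1} = x^s\cdot x^m q_{j-1} \equiv \sum_{l=0}^{m-1}a_{n-l}x^{l+s}$, of degree $\le n-i-1 \le n-2$, so $\varepsilon$ does kill it. Thus your conclusion is right, and the earlier, vaguer phrasing (``use the recursion repeatedly to show the product reduces to lower order'') was closer to the truth than the sharpened sub-lemma you then proposed. The argument goes through once the $a$-correction terms are written in rather than suppressed.
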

\begin{proof}
This follows immediately from the fact that the inverse of $\lambda = [[\varepsilon(x^{i+j-2})]]$, after reducing $mod(p(x))$, is of the form
\[
\lambda^{-1}=
\begin{bmatrix}
-a_{n-1} & -a_{n-2} & \ldots & -a_1 & 1 \\
-a_{n-2} & \ldots & \ldots & 1 & 0 \\
\vdots & \vdots & \vdots & \vdots & \vdots \\
-a_1 & 1 & 0 & \ldots & 0 \\
1 & 0 & \ldots & \ldots & 0
\end{bmatrix}
\]
\end{proof}

Notice that for the universal rank-2 case this gives the familiar dual basis of $\lbrace(x,1),(1,x-a_1)\rbrace$, while for the universal rank-3 case we have $\lbrace(x^2,1),(x,x-a_1),(1,x^2-a_1x-a_2)\rbrace$.

Equipped with a dual basis, we are now ready to determine the local relations for our universal rank-n Frobenius extension.  Generalizing the presentation from Subsection \ref{subsec: TQFTs and Skein Modules}, we divide these local relations into the three groups that are outlined in detail below.  It is in this general rank $n \geq 2$ case that our distaste for Bar-Natan's ``dot notation" is finally justified, as it obviously becomes unwiedly with increasing rank.\\
\\
\underline{Sphere Relations}\\
Here our Frobenius form is defined by $\varepsilon(x^{n-1})=1$ and $\varepsilon(x^k)=0$ for $0 \leq k \leq n-2$.  It follows that the sphere relations for the universal rank-n system are:

\[
\begin{picture}(40,30)
\raisebox{30pt}{\scalebox{.15}{\includegraphics[angle=270]{sphere.pdf}}}
\put(-25,17){\fontsize{9}{10.8}$x^{n\kern-1pt{-}\kern-1pt{1}}$}
\end{picture}
\raisebox{10pt}{\ \ = \ \ 1 \ \ \ \ \ \ } 
\begin{picture}(40,30)
\raisebox{30pt}{\scalebox{.15}{\includegraphics[angle=270]{sphere.pdf}}}
\put(-20,18){\fontsize{9}{10.8}$1$}
\end{picture}
\raisebox{10pt}{\ \ = \ \ \ldots \ \ = \ \ } 
\begin{picture}(40,30)
\raisebox{30pt}{\scalebox{.15}{\includegraphics[angle=270]{sphere.pdf}}}
\put(-25,17){\fontsize{9}{10.8}$x^{n\kern-1pt{-}\kern-1pt{2}}$}
\end{picture}
\raisebox{10pt}{\ \ = \ \ 0}
\]
\\
\underline{``Dot Reduction'' Relation}\\
For $A=R[x]/p(x)$, where $p(x) = x^n - a_1 x^{n-1} - ... - a_n$, we have $x^n = a_1 x^{n-1} + ... + a_n$ and the ``dot reduction" relation allows us to re-mark any fixed component of our surface as:

\[
\begin{picture}(45,30)
\raisebox{20pt}{\scalebox{.05}{\includegraphics[angle=270]{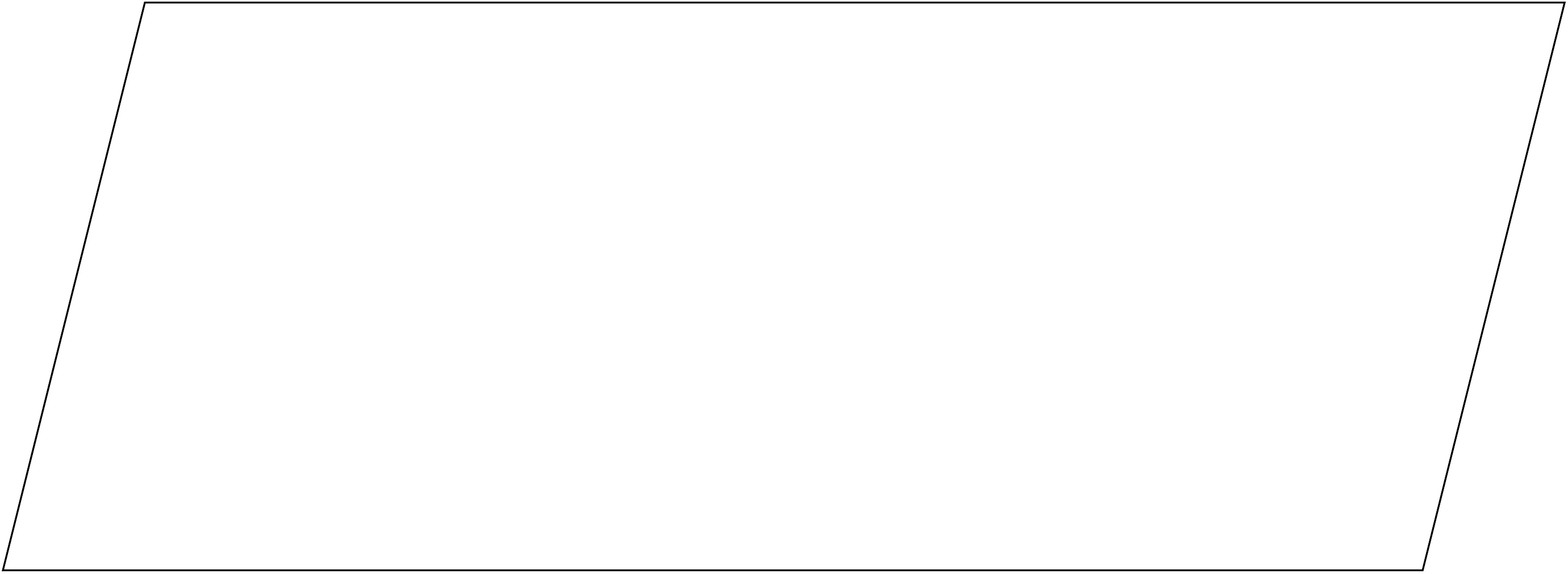}}}
\put(-25,10){\fontsize{9}{10.8}$x^n$}
\end{picture}
\raisebox{10pt}{\ \ = \ }
\begin{picture}(80,30)
\raisebox{30pt}{\scalebox{.12}{\includegraphics[angle=270]{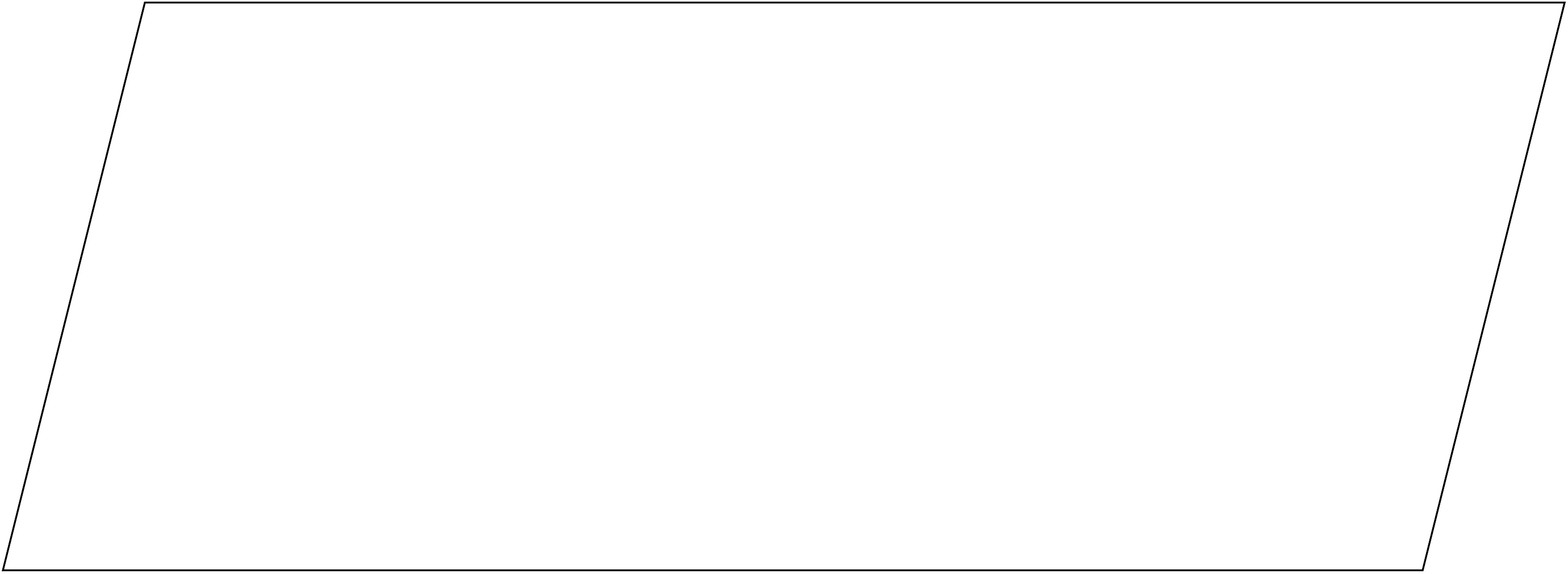}}}
\put(-80,10){\fontsize{9}{10.8}$a_1 x^{n-1} + ... + a_n$}
\end{picture}
\]
\\
\underline{Neck-Cutting Relation}\\
In Lemma \ref{thm: dual basis} we saw that the dual basis for the universal rank-n Frobenius system was $\lbrace (x^{n-1}, 1), (x^{n-2}, x - a_1), (x^{n-3}, x^2 - a_1 x - a_2), \ldots, (1, x^{n-1} - a_1 x^{n-2} - \ldots - a_{n-2} x - a_{n-1}) \rbrace$.  Grouping terms via the coefficients $a_i$, the corresponding neck-cutting relation then takes the elegant form:

\[
\raisebox{29pt}{\scalebox{.2}{\includegraphics[angle=270]{leftside.pdf}}}
\ \ = \ \ 
\begin{picture}(68,0)
\raisebox{29pt}{\scalebox{.2}{\includegraphics[angle=270]{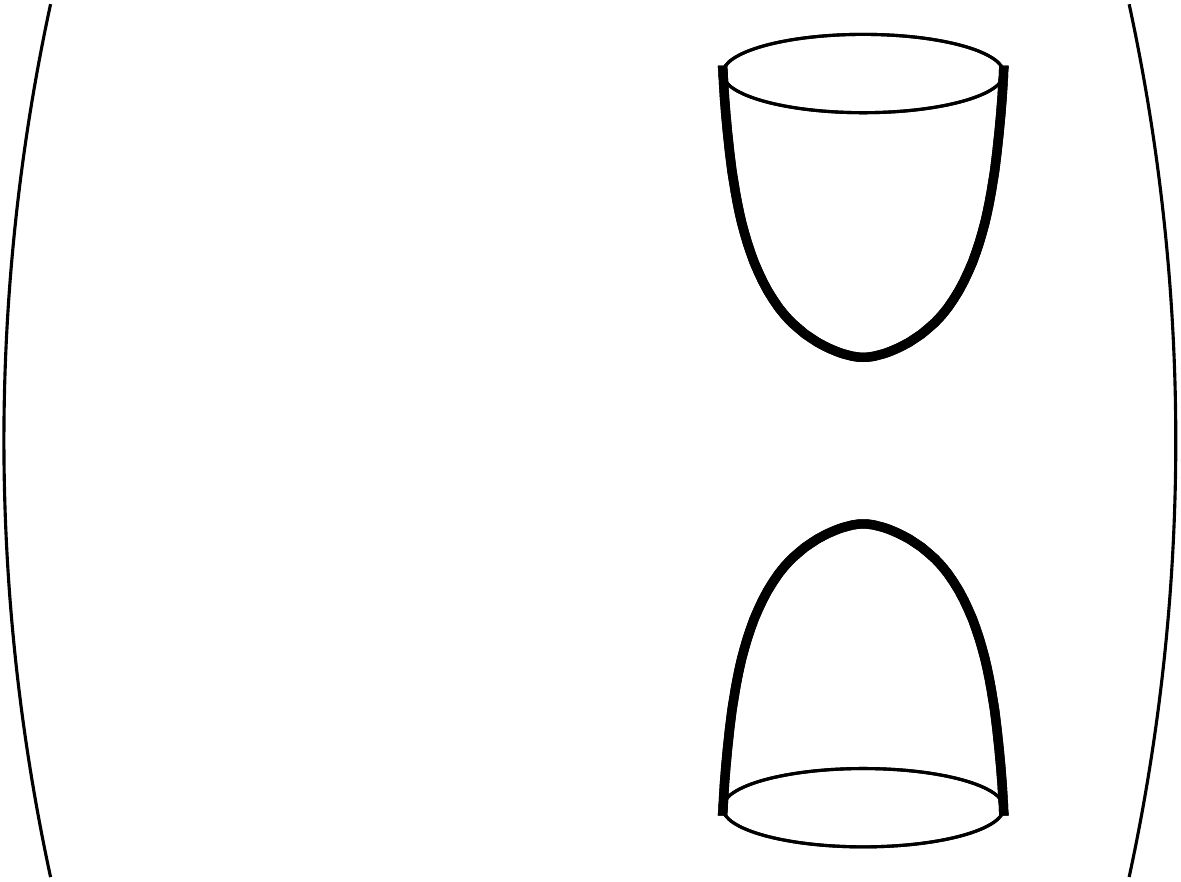}}}
\put(-57,0){\fontsize{16}{19.2}$\sum$}
\put(-58,-12){\fontsize{7}{8.4}$i\kern-.5pt{+}\kern-.5pt{j}=$}
\put(-56,-20){\fontsize{7}{8.4}$n\kern-.5pt{-}\kern-.5pt{1}$}
\put(-22,14){\fontsize{9}{10.8}$x^i$}
\put(-22,-14){\fontsize{9}{10.8}$x^j$}
\end{picture}
\ - \ a_1 \
\begin{picture}(68,0)
\raisebox{29pt}{\scalebox{.2}{\includegraphics[angle=270]{ndots.pdf}}}
\put(-57,0){\fontsize{16}{19.2}$\sum$}
\put(-58,-12){\fontsize{7}{8.4}$i\kern-.5pt{+}\kern-.5pt{j}=$}
\put(-56,-20){\fontsize{7}{8.4}$n\kern-.5pt{-}\kern-.5pt{2}$}
\put(-22,14){\fontsize{9}{10.8}$x^i$}
\put(-22,-14){\fontsize{9}{10.8}$x^j$}
\end{picture}
\ - \ \ldots \ - \ a_{n-1} \
\raisebox{29pt}{\scalebox{.2}{\includegraphics[angle=270]{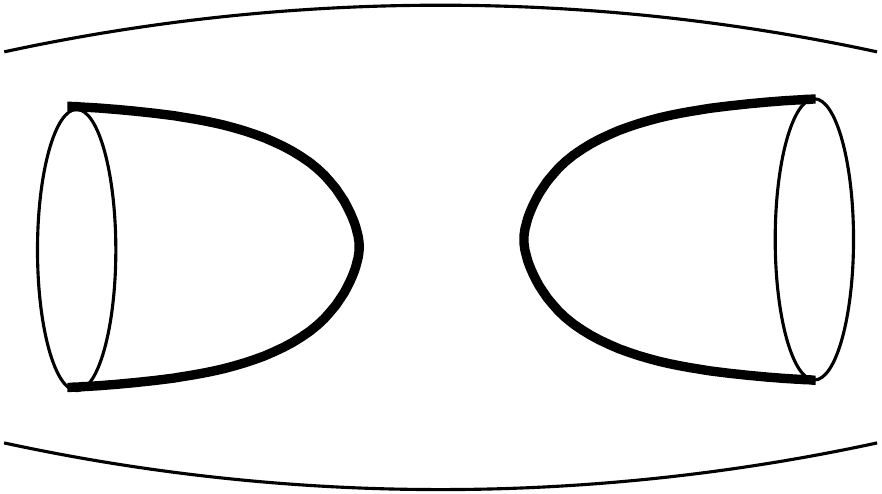}}}
\]

\section{Properties of $sl(n)$ Frobenius Extensions}
\label{sec: properties of skein modules}

\subsection{Neck-Cutting}
\label{subsec: neck-cutting}

We concluded Section \ref{sec: skein modules} by presenting a neck-cutting relation that was associated with the universal $sl(n)$ Frobenius extension.  Here we consider what that relation tells us when the curve that bounds our compression disk is non-separating: when the ``top" and ``bottom" surfaces from the neck-cutting equation are now on the same component.  By $R$-linearity and our aforementioned ability to multiply distinct decorations upon a fixed component, in this case neck-cutting amounts to multiplication on the effected component by a ``genus reduction" term of $g = \sum_{i=1}^n x_i y_i \in A$.  This value always coincides with $m \circ \Delta (1)=m(\sum_{i=1}^n x_i \otimes y_i)=\sum_{i=1}^n x_i y_i$ via the definition of comultiplication and multiplication in any Frobenius extension, a correspondence that is illustrated in Figure 1 below.  Also note that this $g$ is the same as what Kadison and others define to be the $\varepsilon$-index of the ring extension $R \hookrightarrow A$.  We choose our slightly unorthodox notion in order to emphasize its geometric importance within the category of marked cobordisms.

\begin{figure}[htb]
\label{decompfig}
\caption{A handle.}
\begin{center}
\scalebox{.4}{\includegraphics[height = 3 in]{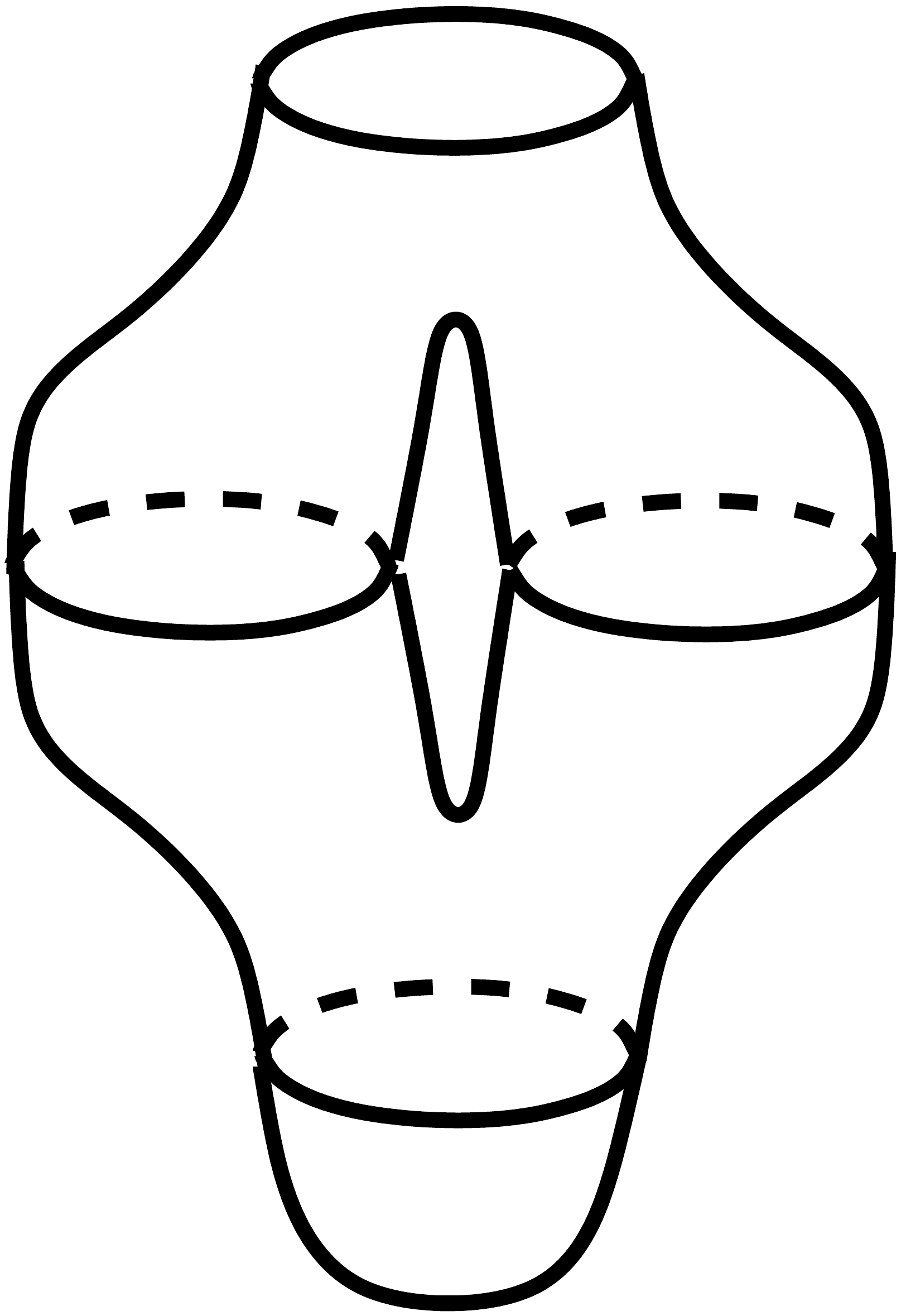}\put(-80,10){\scalebox{2}{$1$}}\put(60,10){\scalebox{2}{$1$}}\put(60,38){\scalebox{2}{$\uparrow$}}\put(45,70){\scalebox{2}{$\Delta(1)$}}\put(60,110){\scalebox{2}{$\uparrow$}}\put(30,150){\scalebox{2}{$m(\Delta(1))$}}
\hspace{2 in}\raisebox{1.5 in}{\scalebox{3}{$=$}\hspace{.2 in}\raisebox{-.4 in}{\includegraphics[height = 1.25 in]{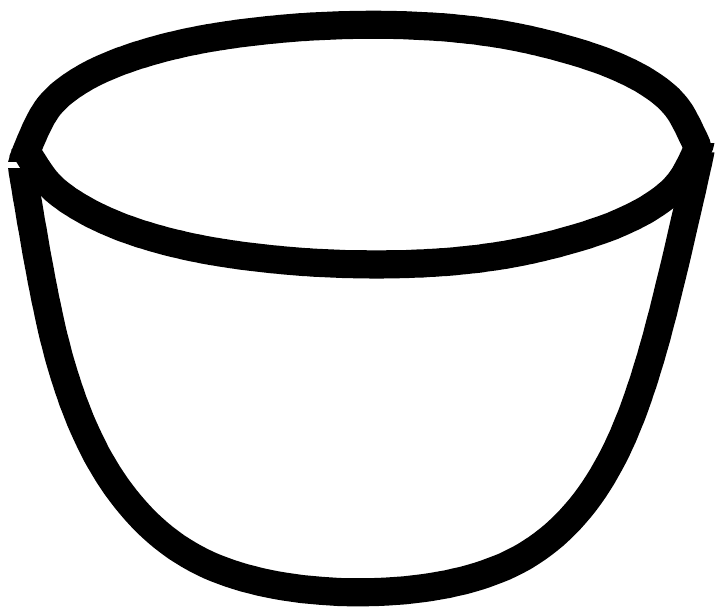}}}\put(-88,108){\scalebox{1.8}{$m(\Delta(1))$}}}
\end{center}
\end{figure}

In particular, $T^2$ decorated with $1$ is equivalent to $S^2$ decorated with $g$.  It follows that the Frobenius form evaluates an unmarked torus to $\varepsilon(g) = n$ in the universal $sl(n)$ Frobenius extension, which is compatible with the fundamental result that, in any 2-D TQFT $Z$, $Z(T^2) \in \Z[a_1,...,a_n]$ equals the rank of the associated Frobenius extension.  Similarly, a genus-$i$ closed, compact surface $\Sigma_i$ decorated with $1$ is equivalent to $S^2$ decorated with $g^i$, as shown in Figure 2.  It follows that an unmarked $\Sigma_i$ is evaluated by our TQFT $Z$ as $\varepsilon(g^i)$.  Note that in this higher genus situation there is some ambiguity in how we choose our compression disks, and to achieve the succinct result above we need to ensure that the $i$ curves bounding those disks are all non-separating (although, naturally, any two ways of compressing down to a incompressible surface must evaluate similarly via $Z$!).

As suggested by its alternative title of $\epsilon$-index, $g$ is dependent upon not only the rings $R,A$ but also upon the choice of Frobenius form $\varepsilon$.  What follows are a series of lemmas that hope to characterize how $g$ behaves under changes in Frobenius structure.

\begin{figure}[htb]
\label{cuttingtosphere}
\caption{Cutting down to a sphere.}
\begin{center}

\scalebox{.5}{\raisebox{-1 in}{\includegraphics[height = 1.5 in]{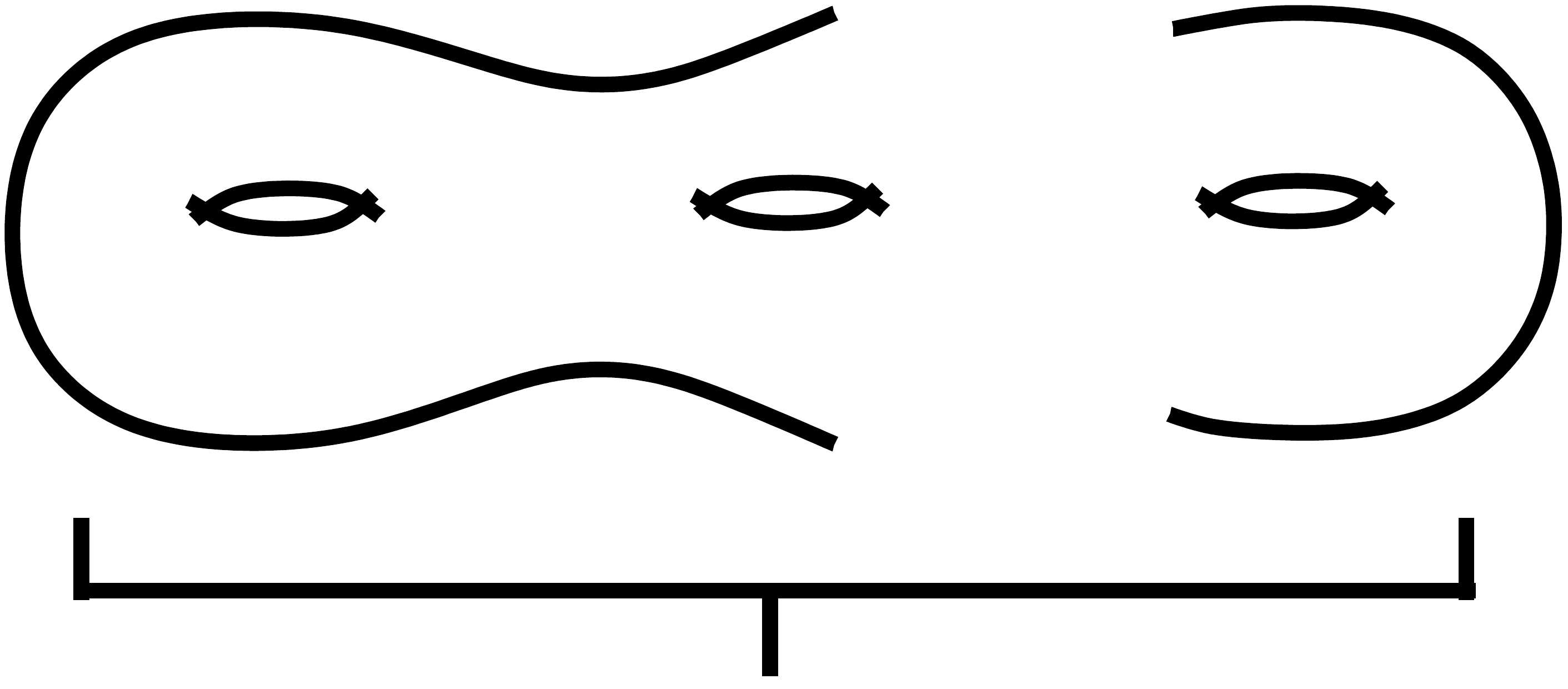}\put(-100,105){\scalebox{2}{$\dots$}}\put(-100,45){\scalebox{2}{$\dots$}}\put(-128,-15){\scalebox{2}{$i$}}}\put(-163,-2){\scalebox{2}{$1$}} \scalebox{2}{ $=$ } \raisebox{-.5 in}{\includegraphics[height = 1 in]{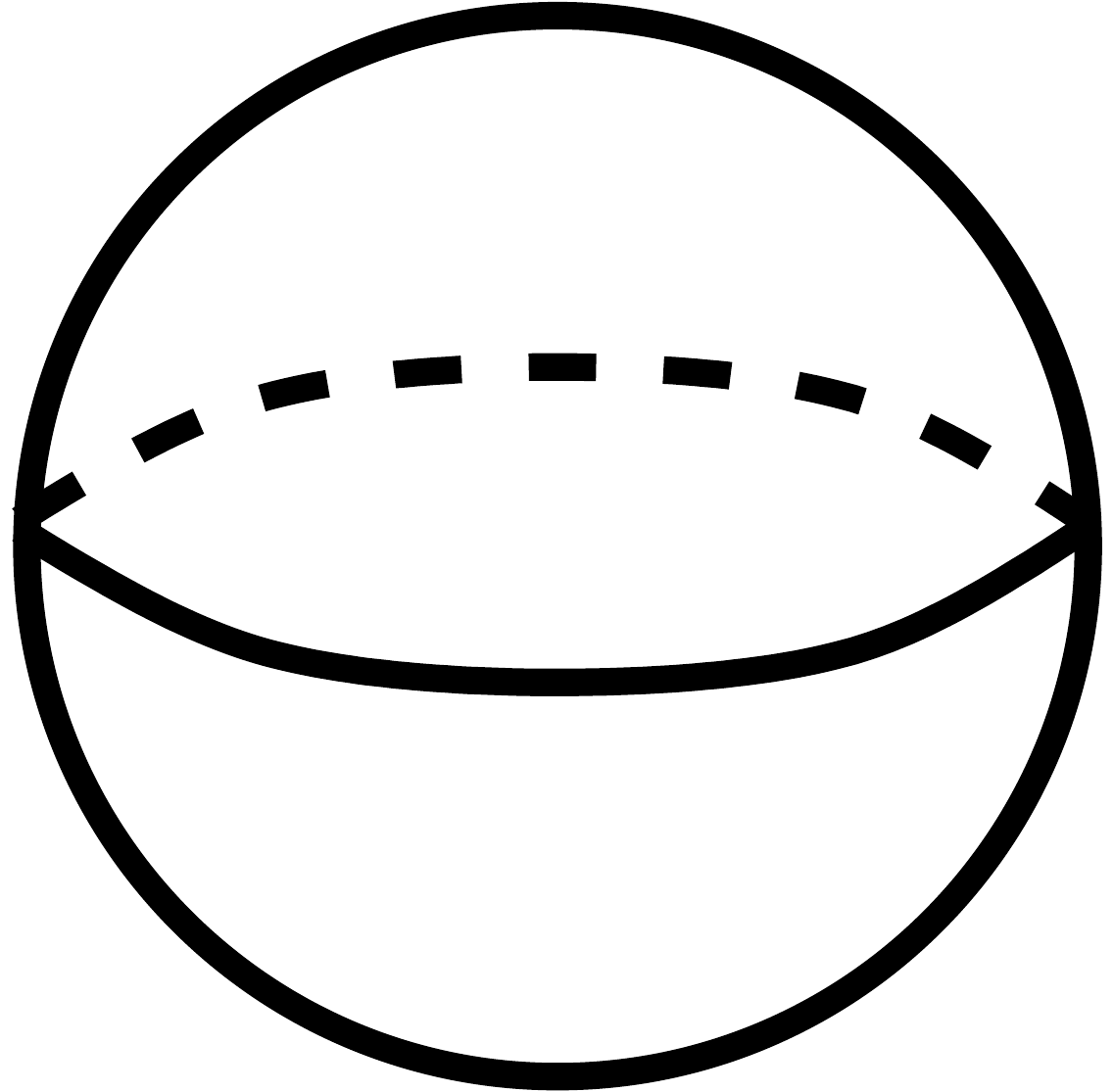}\put(-40,9){\scalebox{2}{$g^i$}}}}

\end{center}

\end{figure}

We begin by considering the case where our two Frobenius systems $(R,A,\varepsilon, (x_i,y_i))$ and $(R,A,\varepsilon, (\tilde{x}_i,\tilde{y}_i)$ differ only via the choice of dual basis.  In Subsection \ref{subsec: frobenius extensions} we noted that $\varepsilon = \tilde{\varepsilon}$ iff $\sum (x_i \otimes y_i) = \sum (\tilde{x}_i \otimes \tilde{y}_i) \in A \otimes A$.  Given the latter equality, the well-definedness of the $A$-linear map $A \otimes A \rightarrow A$, $a \otimes b \mapsto ab$ ensures that $g = \sum x_i y_i = \sum \tilde{x}_i \tilde{y}_i = \tilde{g}$.  Hence $g$ is independent of our choice of dual basis.

The following well-known result characterizes all possible Frobenius forms over a fixed ring extension $R \hookrightarrow A$.  Its proof is readily available in \cite{Kock} or \cite{Kadison} (although \cite{Kock} presents an equivalent argument for the special case of Frobenius algebras with $R=k$ a field):

\begin{lemma}
\label{thm: changing frobenius systems}
Given two Frobenius systems over the same ring extension, $(R,A,\varepsilon, (x_i,y_i))$ and $(R,A,\tilde{\varepsilon}, (\tilde{x}_i,\tilde{y}_i)$, up to change of dual basis we have $(R,A,\tilde{\varepsilon}, (\tilde{x}_i,\tilde{y}_i)=(R,A,\varepsilon(d \underline{\ \ }), (x_i,d^{-1} y_i)$ for some invertible $d \in A$.  Moreover, there is a bijection between equivalence classes of Frobenius systems over $R \hookrightarrow A$ and invertible $d \in A$.
\end{lemma}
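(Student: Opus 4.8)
The plan is to manufacture the invertible element $d$ directly from the two given dual bases, rather than passing abstractly through the self-duality $A\cong A^*$. Given systems $(R,A,\varepsilon,(x_i,y_i))$ and $(R,A,\tilde\varepsilon,(\tilde x_i,\tilde y_i))$ over the same $R\hookrightarrow A$, I would set $d:=\sum_i\tilde\varepsilon(y_i)\,x_i\in A$. The first step is the computation $\varepsilon(da)=\sum_i\tilde\varepsilon(y_i)\,\varepsilon(x_i a)=\tilde\varepsilon\!\big(\sum_i\varepsilon(a x_i)\,y_i\big)=\tilde\varepsilon(a)$ for every $a\in A$, which uses only $R$-linearity of $\varepsilon$ and $\tilde\varepsilon$, commutativity of $A$, and the defining identity $a=\sum_i\varepsilon(a x_i)\,y_i$ of Definition \ref{def: Frobenius extension}. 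Thus $\tilde\varepsilon=\varepsilon(d\,\underline{\ \ })$ exactly.

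Next I would establish that $d$ is invertible. Running the same computation with the two systems interchanged, the element $d':=\sum_i\varepsilon(\tilde y_i)\,\tilde x_i$ satisfies $\tilde\varepsilon(d' a)=\varepsilon(a)$ for all $a$. Combining the two identities gives $\varepsilon(a)=\tilde\varepsilon(d' a)=\varepsilon(d d' a)$ for all $a\in A$, i.e. $\varepsilon\big((d d'-1)a\big)=0$ for all $a$; since no principal ideal lies in the nullspace of the non-degenerate $\varepsilon$, this forces $d d'=1$, so $d\in A^{\times}$ with $d^{-1}=d'$.

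The third step is to verify that $(x_i,d^{-1}y_i)$ is a dual basis for $\tilde\varepsilon$, which is a routine substitution into both halves of Definition \ref{def: Frobenius extension}: $\sum_i x_i\,\tilde\varepsilon(d^{-1}y_i a)=\sum_i x_i\,\varepsilon(y_i a)=a$, and $\sum_i\tilde\varepsilon(a x_i)\,d^{-1}y_i=d^{-1}\sum_i\varepsilon(d a x_i)\,y_i=d^{-1}(d a)=a$. Since, as recalled in Subsection \ref{subsec: frobenius extensions}, any two dual bases of a fixed Frobenius form determine the same element $\sum x_i\otimes y_i\in A\otimes_R A$ (the unit for the $\varepsilon$-multiplication), this is precisely the statement that $(R,A,\tilde\varepsilon,(\tilde x_i,\tilde y_i))=(R,A,\varepsilon(d\,\underline{\ \ }),(x_i,d^{-1}y_i))$ up to change of dual basis.

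Finally, for the bijection I would fix one Frobenius system $(R,A,\varepsilon,(x_i,y_i))$ as a basepoint (one exists by hypothesis) and send $d\in A^{\times}$ to the equivalence class of $(R,A,\varepsilon(d\,\underline{\ \ }),(x_i,d^{-1}y_i))$. Well-definedness needs $\varepsilon(d\,\underline{\ \ })$ to be genuinely non-degenerate: if a principal ideal $(b)$ lay in its nullspace then $(db)$ would lie in the nullspace of $\varepsilon$, forcing $db=0$ and hence $b=0$; a dual basis is supplied by the third step. Surjectivity onto equivalence classes is the content of the first three steps, and injectivity is immediate, since $\varepsilon(d\,\underline{\ \ })=\varepsilon(d'\,\underline{\ \ })$ gives $\varepsilon\big((d-d')a\big)=0$ for all $a$, whence $d=d'$ by non-degeneracy. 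The argument is a chain of short computations, so there is no deep difficulty; the only place demanding real care is the invertibility of $d$ — producing $d^{-1}$ from the second dual basis and applying non-degeneracy correctly — together with pinning down exactly what ``equivalence class'' and ``up to change of dual basis'' mean so that the bijection statement is unambiguous.
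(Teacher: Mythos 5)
Your proof is correct, and it is worth noting that the paper itself does not prove Lemma~\ref{thm: changing frobenius systems} --- it simply cites \cite{Kock} and \cite{Kadison}. What you have supplied is a self-contained, explicit version of that standard argument. The element $d=\sum_i\tilde\varepsilon(y_i)x_i$ is exactly the image of $\tilde\varepsilon\in A^*$ under the self-duality isomorphism $A^*\cong A$, $\phi\mapsto\sum_i\phi(y_i)x_i$, induced by the first system, so you are not doing something exotic --- you are just carrying out the usual ``two non-degenerate forms differ by an invertible element'' computation without invoking $A\cong A^*$ as an intermediary. Each step checks out: the identity $\varepsilon(da)=\tilde\varepsilon(a)$ uses only $R$-linearity, commutativity of $A$, and the two-sided trace identity of Definition~\ref{def: Frobenius extension}; invertibility of $d$ correctly follows from producing the reverse element $d'$ and applying non-degeneracy of $\varepsilon$ to the principal ideal $(dd'-1)$; the verification that $(x_i,d^{-1}y_i)$ is a dual basis for $\varepsilon(d\,\underline{\ \ })$ is a routine substitution; and the bijection is handled carefully, including the one point that is easy to skip, namely that $\varepsilon(d\,\underline{\ \ })$ remains non-degenerate. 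The only stylistic remark I would add is to state explicitly up front that the equivalence relation in play is the one the paper singles out for the case $A=\tilde A$ in Subsection~\ref{subsec: frobenius extensions}, namely $\varepsilon=\tilde\varepsilon$ (equivalently $\sum x_i\otimes y_i=\sum\tilde x_i\otimes\tilde y_i$); your final paragraph uses this implicitly when proving injectivity.
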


Pause to note that, in light of this lemma, it becomes clear that the ``standard" notion of Frobenius equivalence coincides with Kadison's definition when $R,A$ are fixed.  Specifically, the $R$-linear ring automorphism $\phi: A \rightarrow A$ underlying any Frobenius equivalence forces $d=1$ above, thus ensuring $\varepsilon = \tilde{\varepsilon}$.  More germane to our discussion is the following corollary:

\begin{corollary}
\label{thm: epsilon dependence of g}
Given two Frobenius systems on the same ring extension, $(R,A,\varepsilon, (x_i,y_i))$ and $(R,A,\tilde{\varepsilon}, (\tilde{x}_i,\tilde{y}_i)$, their respective genus-reduction terms are related by $\tilde{g}=d^{-1} g$ for some invertible $d \in A$.
\end{corollary}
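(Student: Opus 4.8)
The plan is to deduce this directly from Lemma~\ref{thm: changing frobenius systems} together with the observation, made just before that lemma, that $g$ is independent of the choice of dual basis. First I would apply Lemma~\ref{thm: changing frobenius systems} to put the second system into normal form: after possibly replacing $(\tilde{x}_i, \tilde{y}_i)$ by another dual basis for $\tilde\varepsilon$, we may assume $(R, A, \tilde\varepsilon, (\tilde{x}_i, \tilde{y}_i)) = (R, A, \varepsilon(d\,\underline{\ \ }), (x_i, d^{-1} y_i))$ for some invertible $d \in A$. Next I would note that since $\tilde\varepsilon$ is the Frobenius form of this system regardless of which dual basis we use, and since $g$ depends only on the Frobenius form (equivalently, only on the element $\sum_i \tilde{x}_i \otimes \tilde{y}_i \in A \otimes A$, which the $A$-linear multiplication map $A \otimes A \rightarrow A$ sends to $\tilde{g}$), we are free to compute $\tilde{g}$ using the particular dual basis $(x_i, d^{-1} y_i)$ furnished by the lemma.

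Then the computation is immediate, using commutativity of $A$ to pull $d^{-1}$ out of the sum:
\[
\tilde{g} = \sum_i \tilde{x}_i \tilde{y}_i = \sum_i x_i\, d^{-1} y_i = d^{-1} \sum_i x_i y_i = d^{-1} g .
\]

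The only step requiring a moment's care is the reduction to normal form: one must be sure that the dual-basis independence of $g$ is being invoked for two dual bases of the \emph{same} Frobenius form $\tilde\varepsilon$ (namely the original $(\tilde{x}_i, \tilde{y}_i)$ and the normalized $(x_i, d^{-1} y_i)$), which is exactly the setting of the earlier observation because $\varepsilon(d\,\underline{\ \ }) = \tilde\varepsilon$. There is no genuine obstacle here; once Lemma~\ref{thm: changing frobenius systems} is granted, the corollary follows in one line.
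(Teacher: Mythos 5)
Your proposal is correct and follows essentially the same route as the paper: apply Lemma~\ref{thm: changing frobenius systems} to replace the dual basis of the second system by $(x_i, d^{-1}y_i)$, note that this leaves $\tilde{g}$ unchanged, and then compute $\tilde{g} = \sum_i x_i d^{-1} y_i = d^{-1} g$. Your extra remark about invoking dual-basis independence for the \emph{same} form $\tilde\varepsilon$ is a sound clarification of a point the paper passes over silently.
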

\begin{proof}
By Lemma \ref{thm: changing frobenius systems}, after an appropriate change of dual basis the second system is of the form $(R,A,\varepsilon(d \_ \_), (x_i,d^{-1} y_i)$ for some invertible $d \in A$.  This change of basis leaves $\tilde{g}$ unaffected, so by comparing this modified system to the first system from our theorem we have $\tilde{g}=\sum \tilde{x}_i \tilde{y}_i = \sum x_i d^{-1} y_i = d^{-1} g$.
\end{proof}

This argument obviously extends to show that $\tilde{g}^i = (d^{-1})^i g^i$ for all powers $i \geq 1$.  In general, this corollary tells us little about how $\tilde{g}$ or the $\tilde{g}^i$ are actually evaluated by the $R$-linear Frobenius form, as the necessary $d \in A$ need not be in $R$ and hence can't necessarily be ``pulled out" of the argument for $\varepsilon$ and $\tilde{\varepsilon}$.  However, do note that if $g^i = 0$ (or $g^i \neq 0$) for any Frobenius system over $R \hookrightarrow A$, then $\tilde{g}^i = 0$ ($\tilde{g}^i \neq 0$) for any Frobenius system over $R \hookrightarrow A$.

One final lemma that we will need is that a general Frobenius equivalence respects genus-reduction terms.  When $A = \tilde{A}$, this result follows directly from Lemma \ref{thm: changing frobenius systems} and Corollary \ref{thm: epsilon dependence of g}, but now we allow the case where $A$ and $\tilde{A}$ are merely isomorphic via a Frobenius equivalence.

\begin{lemma}
\label{thm: g under frobenius equivalence}
Assume that $(R,A,\varepsilon, (x_i,y_i))$ and $(R, \tilde{A}, \tilde{\varepsilon}, (\tilde{x}_i,\tilde{y}_i))$ are Frobenius equivalent via $\phi: A \rightarrow \tilde{A}$.  Then $\phi(g) = \tilde{g}$.
\end{lemma}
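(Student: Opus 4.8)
The plan is to show that a Frobenius equivalence $\phi\colon A\to\tilde A$ transports the entire Frobenius system of $A$ onto that of $\tilde A$, and in particular carries $g=\sum_i x_iy_i$ to $\tilde g=\sum_i\tilde x_i\tilde y_i$. Recall from the discussion preceding Lemma~\ref{thm: changing frobenius systems} that the genus-reduction term does not depend on the choice of dual basis; hence it will be enough to produce \emph{some} dual basis for the Frobenius form $\tilde\varepsilon$ whose associated sum of products equals $\phi(g)$. The natural candidate is $\{(\phi(x_i),\phi(y_i))\}$.

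First I would record what the hypotheses supply: $\phi$ is a bijective, multiplicative map with $\phi(1)=1$; as an $R$-linear map it satisfies $\phi(ra)=r\phi(a)$ for all $r\in R$ (equivalently $\phi\circ\iota=\tilde\iota$); and, by the definition of Frobenius equivalence, $\varepsilon=\tilde\varepsilon\circ\phi$. Non-degeneracy of $\tilde\varepsilon$ is already part of the assumption that $(R,\tilde A,\tilde\varepsilon,(\tilde x_i,\tilde y_i))$ is a Frobenius system, so only the reconstruction identity of Definition~\ref{def: Frobenius extension} remains to be checked for the proposed dual basis.

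The heart of the argument is exactly that verification. Given $\tilde a\in\tilde A$, I would use surjectivity of $\phi$ to write $\tilde a=\phi(a)$ and then compute
\[
\sum_i \phi(x_i)\,\tilde\varepsilon\big(\phi(y_i)\tilde a\big)
=\sum_i \phi(x_i)\,\tilde\varepsilon\big(\phi(y_i a)\big)
=\sum_i \phi(x_i)\,\varepsilon(y_i a)
=\phi\!\Big(\sum_i \varepsilon(y_i a)\,x_i\Big)=\phi(a)=\tilde a,
\]
where the third equality uses $R$-linearity of $\phi$ (each $\varepsilon(y_i a)$ lies in $R$) and the fourth uses the defining property of $(x_i,y_i)$. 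The symmetric identity $\sum_i\tilde\varepsilon(\tilde a\,\phi(x_i))\,\phi(y_i)=\tilde a$ follows in the same way. Thus $\{(\phi(x_i),\phi(y_i))\}$ is a dual basis for $\tilde\varepsilon$, and by basis-independence of the genus-reduction term,
\[
\tilde g=\sum_i \phi(x_i)\phi(y_i)=\phi\!\Big(\sum_i x_iy_i\Big)=\phi(g),
\]
the middle step holding because $\phi$ is a ring homomorphism.

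The only thing to be careful about is the bookkeeping in the reconstruction identity: it is the $R$-\emph{linearity} of $\phi$ (not just additivity) that lets the scalar $\varepsilon(y_i a)$ pass through $\phi$, and surjectivity of $\phi$ is genuinely needed in order to reduce to inputs of the form $\phi(a)$. Beyond that the argument is entirely formal, so I do not anticipate a serious obstacle; the one structural point worth flagging is that this lemma genuinely strengthens Corollary~\ref{thm: epsilon dependence of g}, since we are no longer assuming $A=\tilde A$, which is precisely why we route the proof through the explicit dual basis rather than through the invertible element $d$ of Lemma~\ref{thm: changing frobenius systems}.
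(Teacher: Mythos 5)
Your proof is correct and follows essentially the same strategy as the paper's: both reduce to the basis-independence of $g$ and then show that $\phi$ transports the Frobenius data of $A$ onto that of $\tilde A$. The only cosmetic difference is that you verify directly that $\{(\phi(x_i),\phi(y_i))\}$ satisfies the reconstruction identity for $\tilde\varepsilon$, whereas the paper first normalizes $\phi(x_i)=\tilde x_i$ and then checks the Frobenius matrices $\lambda=[[\varepsilon(x_ix_j)]]$ and $\tilde\lambda$ coincide; both routes land in the same place.
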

\begin{proof}
Let $\phi: A \rightarrow \tilde{A}$ be the $R$-linear ring isomorphism such that $\varepsilon = \tilde{\varepsilon} \phi$.  We may assume WLOG that $\phi(x_i)=\tilde{x}_i$, as we have already demonstrated that the genus-reduction term is invariant under change of dual basis.  Recall that we use the matrix $\lambda = [[\varepsilon(x_i x_j)]]$ to determine the $(y_i)$ half of the dual basis.  For our second system above we have $\tilde{\lambda}=[[\tilde{\varepsilon}(\tilde{x}_i \tilde{x}_j)]]=[[\tilde{\varepsilon}(\phi(x_i)\phi(x_j))]]=[[\tilde{\varepsilon}(\phi(x_i x_j))]]=[[\varepsilon(x_i x_j)]]=\lambda$.  Thus if $y_i = \sum_m c_{im} x_m$ in our first system ($c_{im} \in R$), we have the same scalars for $\tilde{y}_i = \sum_m c_{im} \tilde{x}_m$.  Our genus-reduction terms are then $g= \sum_i x_i y_i = \sum_{i,m} c_{im} x_i x_m$ and $\tilde{g} = \sum_i \tilde{x}_i \tilde{y}_i = \sum_{i,m} c_{im} \tilde{x}_i \tilde{x}_m$, from which it follows that $\phi(g)=\tilde{g}$.
\end{proof}

\subsection{Neck-cutting in $sl(n)$ Frobenius Extensions}
\label{subsec: neck-cutting in universal sln}

Let us now direct our attention towards the specific class of universal $sl(n)$ Frobenius extensions that we introduced at the beginning of Section \ref{sec: skein modules}.  With the dual basis that we found in Lemma \ref{thm: dual basis}, we have the genus-reduction term:

\begin{equation}
\label{eq: g}
g = nx^{n-1} - (n-1)a_1x^{n-2} - (n-2)a_2x^{n-3} - ... - a_{n-1}
\end{equation}

Note that this $g$ is merely the derivative $p'(x)$ of the degree-n polynomial from the definition our ring $A=\Z[a_1,...,a_n][x]/(p(x))$.  This will greatly simplify some upcoming calculations. 

Before continuing on, it will prove useful to fully factor $p(x)$ over $\C$ as $p(x)=x^n-a_1x^{n-1}-...-a_{n-1}x-a_n = \prod_{i=1}^n (x + \alpha_i)$.  We may then relate the $a_i$ to the $\alpha_i$ by $a_k = -e_k$, where $e_k$ denotes the k\textsuperscript{th} elementary symmetric polynomial in the $n$ variables $\lbrace \alpha_1, ... \alpha_n \rbrace$.  Note that these succinct equations help to motivate our unconventional decision to write the $\alpha_i$ as the negatives of the roots of $p(x)$ as opposed to the roots themselves, as the later choice would have required the introduction of alternating $(-1)^j$ terms in many of our upcoming results.  Also note that the roots $\alpha_i$ may not all lie in the ring $R=\Z[a_1,...,a_n]=\Z[e_1,...,e_n]$ (although our original coefficients $a_i$ will always lie in the larger ring $\tilde{R}=\Z[\alpha_1,...,\alpha_n]$).  Luckily, despite the fact that a number of our results will depend upon this factorization, and that we will oftentimes need to temporarily pass to the ``more general" Frobenius extension $\tilde{R} \hookrightarrow \tilde{R}[x]/(p(x))$, all of our conclusions will descend back down to our original Frobenius system.

To offer a bit of insight into the general situation, pause to consider the specific case of $n=2$.  Here we have $p(x)=x^2-a_1 x-a_2 = (x+\alpha_1)(x+\alpha_2)$ and hence $g = 2x-a_1 = 2x+(\alpha_1 + \alpha_2)$ by our earlier observation.  It follows that $g^2 = 4 a_2 +a_1^2 = (\alpha_1 - \alpha_2)^2$, so we have $g^2=0$ (and hence $g^i$ for all $ \geq 2$) in $A$ iff the two roots of $p(x)$ coincide.  This result will have a natural extension to higher $n$ that we will address in Theorem \ref{thm: repeated roots}.

At least in the $n=2$ case, the fact that $g^2 \in R$ is a constant also allows us to easily characterize all powers of $g$.  In particular, $g^{2i}= (4 a_2 + a_1^2)^{2i}$ and $g^{2i+1}=(4 a_2 + a_1^2)^{2i} (2x-a_1)$ for all $i \geq 0$.  We also have have $x*g^{2i}= (4 a_2 + a_1^2)^{2i} x$ and $x* g^{2i+1} = (4 a_2 + a_1^2)^{2i} (2x^2-a_1x)=(4 a_2 + a_1^2)^{2i} (a_1 x + 2 a_2)$.  Given our standard Frobenius form $\varepsilon(1)=0$ and $\varepsilon(x)=1$, these results allow us to determine how all (marked) closed genus-$i$ surfaces ($i \geq 1$) evaluate in the Frobenius system.  For $\Sigma_k$ a genus-$k$ surface marked with $1$ and $\dot{\Sigma}_k$ a genus-$k$ surface marked with $x$ we have:
\begin{center}
$\Sigma_{2i}=\varepsilon(g^{2i})=0$ \hspace{1.1in} $\Sigma_{2i+1}=\varepsilon(g^{2i+1})= 2(4 a_2 +a_1^2)^{2i}$\\
$\dot{\Sigma}_{2i}=\varepsilon(x*g^{2i})= (4 a_2 + a_1^2)^{2i}$ \hspace{.38in} $\dot{\Sigma}_{2i+1}=\varepsilon(x*g^{2i+1})=(4 a_2 + a_1^2)^{2i} a_1$
\end{center}

Unfortunately, this extremely elegant result does not fully extend to higher $n$, as we don't typically have $g^i \in R$ for any $i \geq 1$.  For an attempt at tackling this general problem using linear algebra, see Appendix 1.  In order to ensure a relatively simple characterization of the $g^i$ for all $i$, we actually need to impose a condition on our polynomial $p(x)$ akin to what was suggested in the $n=2$ case with $\alpha_1=\alpha_2$.  This brings us to the primary theorem of this section, whose converse we briefly delay:

\begin{theorem}
\label{thm: repeated roots}
Consider the universal $sl(n)$ Frobenius extension $R=\Z[a_1,...,a_n] \hookrightarrow A=\Z[a_1,...,a_n][x]/(p(x))$.  If every root of $p(x)$ is a repeated root, then $g^2 = 0$ in $A$.
\end{theorem}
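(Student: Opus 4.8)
The plan is to exploit the observation already recorded in the excerpt that $g$ is literally the formal derivative $p'(x)$ of $p(x)=x^n-a_1x^{n-1}-\cdots-a_n$. Since $A=R[x]/(p(x))$, proving $g^2=0$ in $A$ is exactly proving the divisibility $p(x)\mid p'(x)^2$ in the polynomial ring. To establish this I would pass, as the excerpt anticipates, to a ring over which $p$ factors into linear pieces in a way that exposes the repeated-root hypothesis: relabelling the roots, write the distinct roots as $-\beta_1,\ldots,-\beta_k$ with multiplicities $m_1,\ldots,m_k$ (so $\sum_j m_j=n$), take $\beta_1,\ldots,\beta_k$ to be independent variables, and set $\tilde{R}=\Z[\beta_1,\ldots,\beta_k]$, so that $p(x)=\prod_{j=1}^k(x+\beta_j)^{m_j}$ in $\tilde{R}[x]$. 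The hypothesis that every root of $p(x)$ is repeated is precisely the statement that $m_j\geq 2$ for every $j$.

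Next I would pin down the order to which $p'(x)$ is divisible by each linear factor. For a fixed $j$, write $p(x)=(x+\beta_j)^{m_j}h_j(x)$ with $h_j(x)\in\tilde{R}[x]$ not divisible by $x+\beta_j$; this is possible because $\tilde{R}[x]$ is a UFD (Gauss, since $\tilde{R}$ is a polynomial ring over $\Z$) and the $x+\beta_j$ are pairwise non-associate irreducibles. Differentiating gives $p'(x)=m_j(x+\beta_j)^{m_j-1}h_j(x)+(x+\beta_j)^{m_j}h_j'(x)$, so $(x+\beta_j)^{m_j-1}\mid p'(x)$. This step is a purely formal manipulation valid over any commutative ring. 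Because the factors $x+\beta_1,\ldots,x+\beta_k$ are pairwise coprime in $\tilde{R}[x]$, these individual divisibilities combine to $\prod_{j=1}^k(x+\beta_j)^{m_j-1}\mid p'(x)$.

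Squaring then yields $\prod_{j=1}^k(x+\beta_j)^{2(m_j-1)}\mid p'(x)^2$, and here the repeated-root hypothesis does the work: $m_j\geq 2$ forces $2(m_j-1)\geq m_j$ for every $j$, so $p(x)=\prod_{j=1}^k(x+\beta_j)^{m_j}\mid\prod_{j=1}^k(x+\beta_j)^{2(m_j-1)}\mid p'(x)^2=g^2$ in $\tilde{R}[x]$. Hence $g^2=0$ in $\tilde{R}[x]/(p(x))$; since the divisibility $p(x)\mid g^2$ takes place in the polynomial ring, it is preserved by any ring homomorphism out of $\tilde{R}$, in particular the one realizing the original $sl(n)$ system, giving $g^2=0$ in $A$ as claimed.

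I do not anticipate a serious obstacle: the argument is short once one thinks in terms of divisibility of polynomials rather than evaluation. The one point demanding care is that $R$ and $\tilde{R}$ are not fields, so I would deliberately avoid field-theoretic ``order of vanishing'' language and instead lean on the UFD structure of $\tilde{R}[x]$ together with the elementary factored-derivative identity, after which the descent back to $A$ is automatic. As a consistency check, the argument collapses exactly when some $m_j=1$: a simple root gives only $2(m_j-1)=0<m_j$, which matches the $n=2$ computation in the excerpt, where distinct roots produce $g^2=(\alpha_1-\alpha_2)^2\neq 0$.
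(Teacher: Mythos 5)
Your core argument is correct and is a genuine (and rather cleaner) variant of the paper's. Both you and the authors identify $g$ with $p'(x)$ and reduce the claim to the divisibility $p(x)\mid p'(x)^2$ after passing to a ring in which $p$ factors into linear terms. The paper expands $p'$ via the product rule over all $n$ roots $\alpha_i$ counted with multiplicity, writes $g^2 = p(x)\sum_{i,j} p(x)/((x+\alpha_i)(x+\alpha_j))$, and notes that each summand is a genuine polynomial precisely because each $\alpha_i$ is repeated. You instead group by the distinct roots $\beta_j$ with multiplicities $m_j\geq 2$, use the standard factored-derivative identity to obtain $(x+\beta_j)^{m_j-1}\mid p'$, combine these via pairwise coprimality in the UFD $\tilde{R}[x]$, and square: this yields the sharp divisor $\prod_j(x+\beta_j)^{2(m_j-1)}$ of $p'(x)^2$, which is a multiple of $p$ exactly when every $m_j\geq 2$. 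That sharpness is the real payoff of your route: it essentially proves the ``if and only if'' in one stroke (a root of multiplicity one kills the divisibility outright, since $(x+\beta_j)\nmid p'$ there), whereas the paper proves the converse separately as Theorem \ref{thm: repeated roots converse} by a considerably heavier argument through the Chinese Remainder Theorem and Frobenius-equivalence lemmas.

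The only step that needs tightening is your descent from $\tilde{R}[x]/(p)$ back to $A=R[x]/(p)$. There is no natural ring homomorphism out of $\tilde{R}$ landing in $R$ --- the containment runs the other way, $R\hookrightarrow\tilde{R}$ --- so ``preserved by a ring homomorphism out of $\tilde{R}$'' does not literally apply. The correct (and simpler) justification: $p$ is monic with coefficients in $R$ and $g^2\in R[x]$, so the division algorithm gives a unique quotient and remainder in $R[x]$, and by uniqueness these coincide with the quotient and remainder computed in $\tilde{R}[x]$; since you have shown the remainder vanishes over $\tilde{R}$, it already vanishes over $R$, giving $g^2\in p(x)\,R[x]$. (Equivalently, $A\to\tilde{A}$ is injective because $p$ is monic, so $g^2=0$ in $\tilde{A}$ forces $g^2=0$ in $A$.) The paper instead carries out this descent by expanding the cofactor $f(x)=g^2/p(x)$ explicitly in the $\alpha_i$ and invoking the Fundamental Theorem of Symmetric Polynomials to place its coefficients in $\Z[a_1,\dots,a_n]$; your division-algorithm observation, once stated correctly, replaces that computation entirely.
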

\begin{proof}
Let $p(x)=\prod_{i=1}^n(x+\alpha_i)$, and assume that each of the $\alpha_i$ is a repeated root.  We temporarily pass to $\tilde{A}=\Z[\alpha_1,...,\alpha_n][x]/(p(x))$ to ensure that $\alpha_i \in \tilde{A}$ for all $i$, and first show that $g^2=0$ in $\tilde{A}$.  Recalling that $g(x)=p'(x)$, by the ordinary product rule for derivatives we have $g=\sum_{i=1}^n \frac{p(x)}{(x+\alpha_i)}$ and $g^2= \sum_{i,j=1}^n \frac{p(x)^2}{(x+\alpha_i)(x+\alpha_j)}=p(x) \sum_{i,j=1}^n \frac{p(x)}{(x+\alpha_i)(x+\alpha_j)}$.  If every root $\alpha_i$ is repeated, every term in $\sum_{i,j=1}^n \frac{p(x)}{(x+\alpha_i)(x+\alpha_j)}$ can be rewritten with denominator $1$ and we see that $p(x)$ divides $g^2$ in $\Z[\alpha_1,...\alpha_n]$.  Thus $g^2 = 0$ in $\tilde{A}$.\\
To prove the stronger statement that $g^2 = 0$ in $A$, we introduce some new notation.  Let $e_k^{\alpha_i \alpha_j}$ denote the k\textsuperscript{th} elementary symmetric polynomial in the $n-2$ roots of $p(x)$ that aren't $\alpha_i$ or $\alpha_j$.  Expanding the degree $n-2$ polynomial $f(x)=\sum_{i,j=1}^n \frac{p(x)}{(x+\alpha_i)(x+\alpha_j)}$ from above, the coefficient of $x^q$ takes the form $c_q = \sum_{i,j=1}^n e_{n-2-q}^{\alpha_i \alpha_j}$.  Each of these $c_q$ is a clearly symmetric polynomial in all of the $\alpha_i$.  Thus by the Fundamental Theorem of Symmetric Functions we know that each of the $c_q$ can be generated by the elementary symmetric polynomials in all of the $\alpha_i$, implying that $f(x)$ is actually in $\Z[e_1,...,e_n]=\Z[a_1,...,a_n]$ and that $g^2 = 0$ in $A$.\\
\end{proof}

Apart from allowing us to quickly evaluate all closed compact surfaces of genus $i \geq 2$, this theorem also implies that any surface with a component admitting multiple (non-separating) compressions in the given type of Frobenius system must evaluate to zero.  This insight will great aid us in Section \ref{sec: embedded skein modules}, when we attempt to give a presentation of skein modules that are embedded within an arbitrary 3-manifold.

We close this section with the converse of Theorem \ref{thm: repeated roots} and a couple of quick corollaries.  This direction of the theorem actually requires a slightly more involved approach, and quite honestly was one that we also could have used above (with a few additional lemmas).  The necessity of the distinct approach is due to the fact that the summation from Theorem \ref{thm: repeated roots} can only be easily reduced to $\sum \frac{p(x)^2}{(x+\alpha_i)^2}$, where the sum is over only the non-repeated roots $\alpha_i$, and that it seems rather difficult to demonstrate that this remaining term is necessarily nonzero in $A$.

Whereas we only needed to briefly switch to the larger ring $\tilde{A}=\Z[\alpha_1,...,\alpha_n][x]/p(x)$ in the proof of Theorem \ref{thm: repeated roots}, the proof of its converse requires that we completely pass to the ``more general" Frobenius extension (with equivalent Frobenius form) over $\tilde{R}=\Z[\alpha_1,...,\alpha_n] \hookrightarrow \tilde{A}$.  This runs against the tradition, followed by Khovanov and others, of adjoining ``just enough" to $\Z$ when defining Frobenius extensions in their development of associated link homologies.  Our departure from convention is justified by the fact that our Frobenius system over $\tilde{R} \hookrightarrow \tilde{A}$ obviously has the same dual basis as the system over $R \hookrightarrow A$, and thus has an identical genus-reduction term $\tilde{g}=g$ that is being reduced mudolo the exact same polynomial $p(x)$.

The following lemma is a straightforward application of the Chinese Remainder Theorem, and explains our reliance upon the ``more general" Frobenius system:

\begin{lemma}
\label{thm: Chinese remainder theorem}
Define $\tilde{R}=\Z[\alpha_1,...\alpha_n]$ as above.  Let $p(x)=\prod_{i=1}^n (x+\alpha_i) = \prod_{i=1}^{m} (x+\alpha_i)^{k_i}$, where $n \geq 2$ and in the second product we have fully grouped like roots.  The Frobenius system over $\tilde{R} \hookrightarrow \tilde{A}=\tilde{R}[x]/(p(x))$, with Frobenius form $\tilde{\varepsilon}(x^{n-1})=1$, $\tilde{\varepsilon}(x^i)=0$ (for $0 \leq i \leq n-2$), is Frobenius equivalent to the Frobenius system over $\tilde{R} \hookrightarrow \hat{A}=\tilde{R}[x]/(x+\alpha_1)^{k_1} \times ... \times \tilde{R}[x]/(x+\alpha_m)^{k_m}$ if we define a Frobenius form on the direct product by $\hat{\varepsilon}(x^{n-1},...,x^{n-1})=1$, $\hat{\varepsilon}(x^i,...,x^i)=0$ (for $0 \leq i \leq n-2$).
\end{lemma}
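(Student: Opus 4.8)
The plan is to exhibit the asserted Frobenius equivalence as the Chinese Remainder isomorphism, and then check it is compatible with the two traces. Writing $q_i(x) = (x+\alpha_i)^{k_i}$, so that $p(x) = q_1(x)\cdots q_m(x)$, I would consider the $\tilde{R}$-algebra homomorphism
\[
\phi\colon\ \tilde{A} = \tilde{R}[x]/(p(x)) \longrightarrow \hat{A} = \tilde{R}[x]/(q_1(x)) \times \cdots \times \tilde{R}[x]/(q_m(x)), \qquad \overline{f(x)} \longmapsto \bigl(\overline{f(x)},\ldots,\overline{f(x)}\bigr)
\]
induced by the $m$ reduction maps. First I would verify that $\phi$ is well defined (immediate, since each $q_i$ divides $p$) and that it is an isomorphism: this is the Chinese Remainder Theorem, i.e.\ it amounts to the ideals $(q_i)$ being pairwise comaximal in $\tilde{R}[x]$; surjectivity is exactly that comaximality statement, and it then forces injectivity because $\tilde{A}$ and $\hat{A}$ are free $\tilde{R}$-modules of the same rank $n = k_1 + \cdots + k_m$.

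Granting the ring isomorphism, the Frobenius comparison is essentially forced by transport of structure. The map $\phi$ sends the standard $\tilde{R}$-basis $\{1,x,\ldots,x^{n-1}\}$ of $\tilde{A}$ to the ``diagonal'' elements $\delta_j := (\overline{x^j},\ldots,\overline{x^j})$, which therefore constitute an $\tilde{R}$-basis of $\hat{A}$; hence the assignment $\hat{\varepsilon}(\delta_{n-1}) = 1$, $\hat{\varepsilon}(\delta_j) = 0$ for $0 \le j \le n-2$ does extend uniquely to an $\tilde{R}$-linear functional $\hat{\varepsilon}\colon \hat{A} \to \tilde{R}$, and by construction $\hat{\varepsilon}\circ\phi$ and $\tilde{\varepsilon}$ agree on the standard basis, hence everywhere by $\tilde{R}$-linearity. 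So $\phi$ is an $\tilde{R}$-linear ring isomorphism with $\tilde{\varepsilon} = \hat{\varepsilon}\circ\phi$, which is precisely the definition of a Frobenius equivalence; non-degeneracy of $\hat{\varepsilon}$ and a dual basis for $\hat{A}$ are then inherited from $\tilde{A}$ through $\phi$ (explicitly, $(\phi(x_i),\phi(y_i))$ for the dual basis $(x_i,y_i)$ of Lemma \ref{thm: dual basis}).

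I expect the one point genuinely needing care to be the comaximality of the $(q_i)$ over the non-field ring $\tilde{R} = \Z[\alpha_1,\ldots,\alpha_n]$: since $(x+\alpha_i) - (x+\alpha_j) = \alpha_i - \alpha_j$, a B\'ezout identity expressing $1$ from $q_i$ and its cofactor only materializes after inverting (a power of) $\prod_{i<j}(\alpha_i - \alpha_j)$, so it is cleanest to invoke CRT over the localization $\tilde{R}[\,\tfrac{1}{\alpha_i-\alpha_j}\,]$ (equivalently over $\tilde{R}\otimes_{\Z}\mathbb{Q}$) before descending back to $\tilde{R}$. This is harmless for everything the lemma is later used for -- e.g.\ deciding whether $g^2$ vanishes, which one detects via the projection $\tilde{A}\to\tilde{R}[x]/(x+\alpha_i)$ onto a simple-root factor -- since such (non)vanishing statements survive this flat base change. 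Apart from this ring-theoretic caveat, the argument is the routine bookkeeping sketched above.
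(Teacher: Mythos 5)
Your route is the same one the paper takes: let $\phi$ be the product of the reduction maps $\tilde{R}[x]/(p)\to\tilde{R}[x]/\bigl((x+\alpha_i)^{k_i}\bigr)$, note that $\phi$ carries the monomial basis $\{1,x,\ldots,x^{n-1}\}$ to the diagonal tuples so that $\tilde{\varepsilon}=\hat{\varepsilon}\circ\phi$ holds tautologically, and transport non-degeneracy and the dual basis across $\phi$. The paper's two-sentence proof is exactly this, minus the care you take.

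What you flag at the end, however, is not a mere caveat --- it is a genuine gap in the paper's argument, and you have diagnosed it correctly. Over $\tilde{R}=\Z[\alpha_1,\ldots,\alpha_n]$ the ideals $\bigl((x+\alpha_i)^{k_i}\bigr)$ are in general \emph{not} pairwise comaximal: $(x+\alpha_i)-(x+\alpha_j)=\alpha_i-\alpha_j$ is typically a non-unit, so the Chinese Remainder map need not be surjective and need not be a ring isomorphism. A concrete failure: take $p(x)=x(x+2)$, so $\tilde R=\Z$; then $\phi\colon\Z[x]/(x^2+2x)\to\Z\times\Z$, $a+bx\mapsto(a,\,a-2b)$, has image the index-two subring $\{(u,v):u\equiv v\ (\mathrm{mod}\ 2)\}$. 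In fact $\Z[x]/(x^2+2x)$ has no nontrivial idempotents whereas $\Z\times\Z$ does, so \emph{no} ring isomorphism exists, and the lemma as literally stated fails here. Your repair --- invert $\prod_{i<j}(\alpha_i-\alpha_j)$ (equivalently pass to $\tilde R\otimes_\Z\mathbb{Q}$) so that the $\bigl((x+\alpha_i)^{k_i}\bigr)$ become comaximal, prove the Frobenius equivalence there, and descend non-vanishing conclusions along the injection $\tilde A\hookrightarrow\tilde A\otimes_{\tilde R}\tilde R_S$ --- is the right fix and is sufficient for the only downstream use, Theorem \ref{thm: repeated roots converse}, since that argument needs only $\tilde g\neq 0$, which is detected after the flat injective base change. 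So your proposal does not merely match the paper's proof; it identifies and closes a hole in it.
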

\begin{proof}
The underlying ring and $R$-linear isomorphism $\phi: \tilde{A} \rightarrow \hat{A}$ follows from the Chinese Remainder Theorem and is given by $\phi(a)=(a,...,a)$.  It is immediate that $\tilde{\varepsilon}=\hat{\varepsilon} \phi$, with the fact that $\tilde{\varepsilon}$ contains no ideals in its nullspace then ensuring the same about $\hat{\varepsilon}$.
\end{proof}

The Frobenius structure that we emplaced on $\tilde{R} \hookrightarrow \hat{A}$ isn't the ``natural" one that brings together the $sl(n)$ systems on each of the coordinates of $\hat{A}$.  In particular, we have done none of the prerequisite work towards determing the dual basis (and hence the genus reduction term) of that system.  The ``natural" Frobenius structure that we want over $\tilde{R} \hookrightarrow \hat{A}$ is the following:
\begin{itemize}
\item Basis $\lbrace (1,0,...,0),...,(x^{k_1-1},0,...,0),(0,1,0,...,0),... \ , \ ...,(0,...,0,x^{k_m-1}) \rbrace$.
\item Frobenius form on that basis given by $\varepsilon'(x^{k_1-1},0,...,0)=\varepsilon'(0,...,0,x^{k_m-1})=1$ and $\varepsilon'(u)=0$ for every other basis element $u$.
\end{itemize}

The Frobenius matrix $\lambda'$ for this system is then block diagonal, with one block for each distinct root $\alpha_i$.  If $\alpha_i$ is a multiplicity one root, its corresponding block is $1 \times 1$ and is the constant matrix $[[1]]$.  If $\alpha_i$ is of multiplicity $n \geq 2$, its block is $n \times n$ with entries identical to the Frobenius matrix $\lambda_i$ for the $sl(n)$ Frobenius system over $R_i=\Z[\alpha_i] \hookrightarrow A_i=\Z[\alpha_i][x]/((x+\alpha_i)^{k_i})$.  As none of these blocks are zero, $\lambda'$ is invertible and $\varepsilon'$ is in fact a nondegenerate Frobenius form.

$(\lambda')^{-1}$ is also block diagonal, with blocks either $[[1]]$ or $(\lambda_i)^{-1}$.  It follows that the dual basis for the above system is $\lbrace ((1,0,...,0),(y_{1,1},0,...,0))$, $...$, $((x^{k_1-1},0,...,0),(y_{1,k_1},0,...,0))$, $((0,1,0,...,0),(0,y_{2,1,0,...,0}))$, $... \ , \ ...$, $((0,...0,x^{k_m-1}),(0,...,0,y_{m,k_m})) \rbrace$, where $y_{i,j}$ is equal to the companion of $x^{j-1}$ in the $sl(n)$ dual basis over $R_i \hookrightarrow A_i$.  The associated genus reduction term is then $g'=(g_1,g_2,...,g_m)$, where $g_i$ is the genus reduction term over $R_i \hookrightarrow A_i$ (with $g_i=1*1=1$ in the coordinates corresponding to multiplicity one roots).  By Theorem \ref{thm: repeated roots}, $(g_i)^2=0$ whenever $\alpha_i$ is a repeated root, while clearly $(g_i)^2=1$ if $\alpha_i$ is multiplicity one.  For this system, it is then obvious that $(g')^2=(g_1^2,g_2^2,...,g_m^2)=0$ iff every root $\alpha_i$ of $p(x)$ is repeated.

We are now ready for the converse of Theorem \ref{thm: repeated roots}:

\begin{theorem}
\label{thm: repeated roots converse}
Let $R= \Z[a_1,...,a_n] \hookrightarrow A= \Z[a_1,...,a_n][x]/(p(x))$ be a $sl(n)$ Frobenius extension ($n \geq 2$), and assume that $p(x)$ has at least one root of multiplicity precisely $1$.  Then $g^2 \neq 0$ in $A$
\end{theorem}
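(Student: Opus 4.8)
The plan is to reduce the statement, via the Chinese Remainder Lemma and the two genus-reduction lemmas, to a componentwise computation on the direct-product ring $\hat{A}$, where the nonvanishing is transparent; this sidesteps the difficulty (flagged just above the theorem) of showing directly that $\sum \frac{p(x)^2}{(x+\alpha_i)^2}$, summed over the non-repeated roots, is nonzero in $A$.

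First I would descend the problem into the larger ring. Let $\tilde{R}=\Z[\alpha_1,\dots,\alpha_n]$ and $\tilde{A}=\tilde{R}[x]/(p(x))$. Since $\{1,x,\dots,x^{n-1}\}$ is simultaneously a free $R$-basis of $A$ and a free $\tilde{R}$-basis of $\tilde{A}$, and since the structure map $R=\Z[e_1,\dots,e_n]\hookrightarrow\tilde{R}$ is injective (the $e_i$ being algebraically independent over $\Z$), the induced map $A\hookrightarrow\tilde{A}$ is injective. As $g=p'(x)$ and hence $g^2$ lie in $A$, it suffices to prove $g^2\neq 0$ in $\tilde{A}$; moreover the Frobenius system over $\tilde{R}\hookrightarrow\tilde{A}$ has the same dual basis, hence the same genus-reduction term $g$, as the one over $R\hookrightarrow A$.

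Next I would transport the computation to $\hat{A}=\tilde{R}[x]/((x+\alpha_1)^{k_1})\times\cdots\times\tilde{R}[x]/((x+\alpha_m)^{k_m})$. By Lemma~\ref{thm: Chinese remainder theorem} the CRT map $\phi:\tilde{A}\to\hat{A}$, $\phi(a)=(a,\dots,a)$, is a Frobenius equivalence between $(\tilde{R},\tilde{A},\tilde{\varepsilon})$ and $(\tilde{R},\hat{A},\hat{\varepsilon})$, so by Lemma~\ref{thm: g under frobenius equivalence} it carries $g$ to the genus-reduction term $\hat{g}$ of the $\hat{\varepsilon}$-system; thus $g^2=0$ in $\tilde{A}$ iff $\hat{g}^2=0$ in $\hat{A}$. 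Now $\hat{\varepsilon}$ and the ``natural'' form $\varepsilon'$ are two Frobenius forms on the single ring extension $\tilde{R}\hookrightarrow\hat{A}$, so by Corollary~\ref{thm: epsilon dependence of g} their genus-reduction terms differ by multiplication by an invertible element $u\in\hat{A}$; since $u^2$ is a unit, $\hat{g}^2=0$ iff $(g')^2=0$ in $\hat{A}$.

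Finally I would evaluate $(g')^2$ componentwise. As recorded in the discussion preceding the theorem, $g'=(g_1,\dots,g_m)$ with $g_i$ the genus-reduction term of the $sl(k_i)$ system on the $i$-th factor, and $g_i=1$ in the coordinates corresponding to multiplicity-one roots. By hypothesis some $\alpha_i$ has $k_i=1$; for that index the $i$-th component of $(g')^2$ is $1^2=1$, which is nonzero since the factor $\tilde{R}[x]/(x+\alpha_i)\cong\tilde{R}=\Z[\alpha_1,\dots,\alpha_n]$ is not the zero ring. Hence $(g')^2\neq 0$, and running the chain of equivalences backwards yields $g^2\neq 0$ in $A$. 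I expect the only genuinely delicate point to be this very first reduction — ensuring that nonvanishing persists under the descent from $\tilde{A}$ back to $A$, i.e.\ that $A\hookrightarrow\tilde{A}$ is injective — while everything downstream is bookkeeping with the structural lemmas already established.
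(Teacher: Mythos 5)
Your proof is correct and follows essentially the same route as the paper's: pass to $\tilde{R}\hookrightarrow\tilde{A}$, identify $\tilde{A}$ with the product ring $\hat{A}$ via the Chinese Remainder Theorem and Lemma~\ref{thm: g under frobenius equivalence}, change the Frobenius form on $\hat{A}$ via Corollary~\ref{thm: epsilon dependence of g} to reach the ``natural'' product form whose genus-reduction term has square visibly nonzero in the multiplicity-one coordinate, then descend back to $A$. The one small divergence is in how you justify the descent --- you argue directly that $A\hookrightarrow\tilde{A}$ is injective using the algebraic independence of the $e_i$, while the paper phrases it as the nonexistence of an $f(x)\in R[x]\subseteq\tilde{R}[x]$ with $p(x)f(x)$ equal to the polynomial representing $g^2$ --- but these amount to the same observation.
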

\begin{proof}
Let $p(x)= \prod_{i=1}^m (x+\alpha_i)^{k_i}$, where we have completely grouped like roots, and assume WLOG that $k_1=1$.  We pass to the ``larger" ring extension $\tilde{R} \hookrightarrow \tilde{A}$ described previously, and show that $\tilde{g}=g \neq 0$ in $\tilde{A}$.\\
By Lemma \ref{thm: Chinese remainder theorem}, $\lbrace \tilde{R}, \tilde{A}, \tilde{\varepsilon}, (\tilde{x}_i,\tilde{y}_i) \rbrace$ is Frobenius equivalent to the ``product" Frobenius system $\lbrace \tilde{R}, \hat{A}, \hat{\varepsilon}, (\hat{x}_i,\hat{y}_i) \rbrace$.  By preceding discussion, there exists a Frobenius structure $\lbrace \tilde{R}, \hat{A}, \varepsilon', ({x'}_i, {y'}_i) \rbrace$ over $\tilde{R} \hookrightarrow \hat{A}$ with genus reduction term nonzero.  Corollary \ref{thm: epsilon dependence of g} then ensures that $\hat{g} \neq 0$.  The aforementioned Frobenius equivalence, combined with Lemma \ref{thm: g under frobenius equivalence}, then gives $\tilde{g} \neq 0$.
With $\tilde{g} \neq 0$ in $\tilde{A}$, there cannot exist $\tilde{f}(x) \in \tilde{R}[x]$ such that $p(x)\tilde{f}(x)= \tilde{g}$.  Hence there cannot exist $f(x) \in R[x] \subseteq \tilde{R}[x]$ such that $p(x)f(x) = \tilde{g}=g$, giving $g \neq 0$ in our Frobenius extension over $R \hookrightarrow A$.
\end{proof}

An equivalent argument to Theorem \ref{thm: repeated roots converse} shows that, if $p(x)$ has at least one root of multiplicity $1$, then $g^i \neq 0$ in $A$ for all $i \geq 2$.  Combining results then gives the relatively succinct corollary.

\begin{corollary}
\label{thm: behavior of g^i}
Let $R= \Z[a_1,...,a_n] \hookrightarrow A= \Z[a_1,...,a_n][x]/(p(x))$ be a $sl(n)$ Frobenius extension of rank $n \geq 2$.  If every root of $p(x)$ if repeated, then $g^i=0$ in $A$ for all $i \geq 2$.  Otherwise, $g^i \neq 0$ in $A$ for all $i \geq 2$.
\end{corollary}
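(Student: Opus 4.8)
```latex
The plan is to obtain Corollary \ref{thm: behavior of g^i} by simply assembling results already in hand: Theorem \ref{thm: repeated roots} (in its strengthened ``for all $i \geq 2$'' form), the converse Theorem \ref{thm: repeated roots converse}, and the remark immediately following the converse's proof (that the same argument gives $g^i \neq 0$ for all $i \geq 2$ when $p(x)$ has a multiplicity-one root). Since the corollary is a logical dichotomy, the main task is to verify that the two cases are genuinely exhaustive and mutually exclusive, and that each implication runs the right way.

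First I would treat the ``every root repeated'' case. Theorem \ref{thm: repeated roots} as stated only asserts $g^2 = 0$, so I would first note that $g^2 = 0$ immediately forces $g^i = 0$ for all $i \geq 2$, since $g^i = g^2 \cdot g^{i-2}$ in the commutative ring $A$. (Alternatively, one can observe directly from the proof of Theorem \ref{thm: repeated roots} that $p(x) \mid g^2$ in $\Z[\alpha_1,\ldots,\alpha_n]$ forces $p(x) \mid g^i$ for all $i \geq 2$, but invoking $g^i = g^2 g^{i-2}$ is cleaner.) This disposes of the first clause.

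Next I would treat the ``not every root repeated'' case, i.e.\ $p(x)$ has at least one root of multiplicity exactly $1$. Here I would cite Theorem \ref{thm: repeated roots converse} for $i = 2$, and then for general $i \geq 2$ appeal to the ``equivalent argument'' indicated right after that proof: working over $\tilde{R} \hookrightarrow \hat{A}$ with the natural product Frobenius structure, the genus-reduction term is $g' = (g_1,\ldots,g_m)$ with $g_1 = 1$ in the multiplicity-one coordinate, so $(g')^i = (g_1^i, \ldots, g_m^i)$ has $i$th coordinate-entry $1 \neq 0$; Corollary \ref{thm: epsilon dependence of g} (extended to powers, as noted there) gives $\hat{g}^i \neq 0$, Lemma \ref{thm: g under frobenius equivalence} transports this to $\tilde{g}^i = g^i \neq 0$ in $\tilde{A}$, and the descent argument of Theorem \ref{thm: repeated roots converse} (no $f(x) \in R[x]$ with $p(x) f(x) = g^i$, since none exists even in $\tilde{R}[x]$) gives $g^i \neq 0$ in $A$.

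The only thing that could be called an obstacle is purely bookkeeping: one must confirm that ``every root of $p(x)$ is repeated'' and ``$p(x)$ has a root of multiplicity $1$'' are exact logical complements — which they are, once one fixes the factorization $p(x) = \prod_{i=1}^m (x+\alpha_i)^{k_i}$ over $\C$ with like roots grouped, since then ``every root repeated'' means every $k_i \geq 2$ and its negation is ``some $k_i = 1$.'' With that observation the two cases cover all $p(x)$, and since $g^i = 0$ and $g^i \neq 0$ cannot both hold the dichotomy is clean. Thus the proof is a two-line assembly: cite Theorem \ref{thm: repeated roots} plus $g^i = g^2 g^{i-2}$ for the first clause, and Theorem \ref{thm: repeated roots converse} plus its stated extension to all $i \geq 2$ for the second.
```
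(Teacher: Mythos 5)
Your proposal is correct and takes the same route as the paper, which in fact gives no formal proof of the corollary at all: its entire justification is the sentence immediately preceding it, which says that an argument ``equivalent to'' Theorem \ref{thm: repeated roots converse} yields $g^i \neq 0$ for all $i \geq 2$ when $p(x)$ has a simple root, and that ``combining results'' gives the corollary. Your write-up simply fills in what that sentence leaves implicit — the trivial step $g^i = g^2\,g^{i-2}$ on the repeated-root side, and on the other side the product-coordinate computation $(g')^i = (g_1^i,\dots,g_m^i)$ with a unit in the multiplicity-one slot, followed by Corollary \ref{thm: epsilon dependence of g} (in its stated power form), Lemma \ref{thm: g under frobenius equivalence}, and the descent from $\tilde{A}$ to $A$ — together with the check that the two hypotheses are complementary. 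This is exactly what the authors intended, just written out.
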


This corollary implies that the $sl(n)$ Frobenius extensions associated to $p(x)$ with non-repeated roots have the potential to be extremely complicated, in the sense that they may have closed compact 2-manifolds of arbitrarily high genus that evaluate to nonzero elements of $R$ via $\varepsilon$.

\section{Skein Modules}
\label{sec: embedded skein modules}


In \cite{asaeda}, Asaeda and Frohman explored the free module of isotopy classes of surfaces in a 3-manifold, subject to relations coming from a TQFT over a ring $R$. As in a TQFT, disjoint union behaves like tensor product over $R$. Therefore a surface is viewed as a tensor product of its connected components. The surfaces form a module and this module is an invariant of the 3-manifold the surfaces are embedded in.

The embedded surfaces must be treated slightly differently than the abstract surfaces associated to the TQFT. For instance, it is often the case that the neck-cutting relation cannot be applied as there is no compressing disk present in the 3-manifold. In addition, it is stipulated that the sphere relations only apply to spheres that bound balls. Other than those two considerations, the surfaces are treated as they would be if they are coming from the TQFT. Uwe Kaiser gives a  thorough treatment of obtaining skein modules of 3-manifolds from Frobenius extensions in \cite{kaiser}

In the previous sections we have been dealing with the universal $sl(n)$ Frobenius extensions $R \hookrightarrow A \text{ with } A = R[a_1,\dots,a_n]/(p(x))$, where $p(x)=x^n-a_1x^{n-1} - \dots - a_n$. We now define $K_n(M)$ to be the skein module of $M$ where the surfaces are subject to the relations coming from the general $sl(n)$ Frobenius extension.

Often the $a_i$ are simply indeterminates, but sometimes it is interesting or helpful to examine the skein module where the $a_i$ are subject to certain conditions. This will be indicated by $K_n(M)[\{f_j(a_1,\dots a_n)=0\}_j]$, where the $a_i$ satisfy $f_j(a_1,\dots a_n)=0$, for all $j$.

\subsection{3-manifold Preliminaries}

In order to develop and explore the skein modules, we recall some definitions concerning the study of 3-manifolds.

\begin{definition}

A three-manifold is {\textbf{irreducible}} it every two-sphere bounds a three-ball.

\end{definition}

\begin{definition}

A curve on a surface is {\textbf{inessential}} if it bounds a disk on the surface.  Otherwise the curve is \textbf{essential}.

\end{definition}

\begin{definition}

Let $S$ be a surface embedded in three-manifold $M$.  $S$ is {\textbf{compressible} if $S$ contains an essential curve that bounds a disk, $D$, in $M$ such that $S \cap D = \partial D$.  If no such curves exist and $S$ is not a two-sphere that bounds a ball, then $S$ is \textbf{incompressible}}.

\end{definition}

The compressability of a surface is extremely important when dealing with skein modules. For instance, when a surface is compressible the neck-cutting relation can be applied to yield an equivalent surface in the skein module.

\subsection{Linear Independence of Unmarked Surfaces}

In \cite{asaeda}, Asaeda and Frohman showed that under certain conditions in the $n=2$ case the unmarked surfaces are linearly independent. We extend their result to any $n$ below.

\begin{theorem}
\label{linindtheorem}
Let $M$ be an irreducible three-manifold. If every root of p(x) is repeated, then the unmarked incompressible surfaces in $K_n(M)$ are linearly independent over $\mathbb{Z}[a_1,\dots,a_n]$
\end{theorem}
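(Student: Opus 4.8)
The plan is to build an $R$-linear functional on $K_n(M)$ that detects the coefficient of any given incompressible unmarked surface, in the spirit of Asaeda--Frohman's original argument for $n=2$. First I would recall that, since $M$ is irreducible, every surface in $K_n(M)$ can be reduced, via the sphere relation (removing spheres that bound balls) and the neck-cutting relation (whenever a compressing disk is present in $M$), to an $R$-linear combination of disjoint unions of \emph{incompressible} surfaces together with possibly some decorated components; because of Theorem~\ref{thm: repeated roots} (repeated roots force $g^2=0$), neck-cutting along a non-separating curve on a component kills that term unless the genus is reduced at most once, so the only surviving incompressible pieces carry at worst a single ``$g$-type'' decoration, which the sphere and dot-reduction relations then push onto a sphere and evaluate. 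The upshot is a spanning set of $K_n(M)$ consisting of disjoint unions of incompressible surfaces, and the theorem amounts to showing no further relations hold among these.

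The key construction is then to mimic the TQFT evaluation ``on the incompressible part.'' Given a fixed isotopy class of a disjoint union $F = F_1 \sqcup \dots \sqcup F_k$ of incompressible surfaces in $M$, I would define a map $\Phi_F$ on the free module that sends a diagram $S$ to an element of $R$ by: (i) checking whether $S$, after the canonical reduction above, contains $F$ as its incompressible part (and sending it to $0$ if not), and (ii) evaluating the leftover closed decorated spheres via $\varepsilon$. The content is that $\Phi_F$ is \emph{well-defined} on $K_n(M)$, i.e.\ respects the sphere, dot-reduction, and neck-cutting relations. Well-definedness under the dot and sphere relations is immediate from the definition of the $sl(n)$ Frobenius form. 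The neck-cutting relation is the one requiring care: when the compressing disk is present in $M$ one must check that applying $\Phi_F$ to the left-hand cylinder agrees with applying it to the right-hand sum, which reduces exactly to the identity $a = \sum_i \varepsilon(a x_i) y_i$ in $A$ together with the fact (from Theorem~\ref{thm: repeated roots} / Corollary~\ref{thm: behavior of g^i}) that higher genus on an incompressible component is impossible after one uses up the single available compression. Since the family $\{\Phi_F\}$, as $F$ ranges over isotopy classes of disjoint unions of incompressible surfaces, separates the proposed basis elements (each $\Phi_F$ is nonzero exactly on $F$, returning the product of the $\varepsilon$-values, which is $1$ on the all-unmarked surface), linear independence follows.

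There is a technical point I would need to handle: a surface may admit \emph{several} inequivalent systems of compressing disks in $M$, so the ``canonical reduction'' must be shown to be well-defined up to the relations, or the functional $\Phi_F$ must be defined in a manifestly isotopy- and relation-invariant way (for instance by an Euler-characteristic / genus bookkeeping that doesn't depend on the order of compressions). This is precisely where Corollary~\ref{thm: behavior of g^i} does the heavy lifting: because $g^2 = 0$, once a component has been compressed along one non-separating curve any further non-separating compression annihilates it, so there is essentially no ambiguity in how much genus reduction can occur before the term dies, and the separating compressions just split a component into a connected sum whose pieces are again incompressible or reducible. I expect this ``independence of the reduction procedure'' step to be the main obstacle; the algebraic ingredients (properties of $g$, the $sl(n)$ Frobenius form, neck-cutting) are all in hand from Sections~\ref{sec: skein modules} and \ref{sec: properties of skein modules}, and the irreducibility of $M$ is exactly what lets us ignore spheres and guarantees the reduction terminates at a genuinely incompressible configuration.
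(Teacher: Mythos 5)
The overall strategy — build separating linear functionals, one per incompressible surface $F$, and check compatibility with the three families of relations — is the same as in the paper. However, there is a genuine gap at the heart of the proposal: you do not say how your functional $\Phi_F$ is to evaluate a \emph{decorated} copy of $F$, and your remark that markings can be ``pushed onto a sphere and evaluated'' is false, since markings cannot jump components and an incompressible surface, by definition, admits no compressing disk along which to neck-cut a decoration off. Yet decorated copies of $F$ unavoidably appear: when a compressible surface built from $F$ plus a handle is sent to $0$ by $\Phi_F$, neck-cutting that handle produces $F$ marked by the genus-reduction element $g = p'(x)$, and you must explain why the functional kills that too.

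The paper's resolution is exactly the step you are missing. Fix a repeated root $\alpha$ of $p(x)$ and set $\lambda_F(F\text{ marked by }x^k) = (-\alpha)^k$. Because $-\alpha$ is a root of $p$, this is an $R$-algebra evaluation and the dot-reduction relation is respected; because $\alpha$ is a \emph{repeated} root, $p'(-\alpha)=0$, so the neck-cut of the extra handle evaluates to $\lambda_F(F \text{ marked by } g) = g(-\alpha) = p'(-\alpha) = 0$, matching $\lambda_F = 0$ on the original compressible surface. Your appeal instead to the generic Frobenius identity $a = \sum_i \varepsilon(ax_i)y_i$ cannot be right, since that identity holds for every Frobenius extension and therefore cannot be where the repeated-roots hypothesis is used; and Corollary \ref{thm: behavior of g^i} ($g^2 = 0$) controls what happens at \emph{two or more} extra handles, but says nothing about the single-handle case that the $p'(-\alpha)=0$ identity handles. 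Finally, the ``canonical reduction'' well-definedness problem you flag as a remaining obstacle is real, and the paper sidesteps it entirely by defining $\lambda_F$ directly on representatives (any surface either visibly is $F$-plus-spheres-and-tori or it is not) rather than through a reduction algorithm; if you insist on a reduction procedure you would still owe a proof that it is well-defined modulo the skein relations.
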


\begin{proof}
Let $p(x) = \prod_{i=1}^r (x+\alpha_i)^{k_i}$, where $k_i > 1$ for all $i$. Note that $-\alpha_i$ is a root of $g_n= p'(x)$ for all $i$. We show that the unmarked incompressible surfaces of $K_n(M)$ are linearly independent over $\mathbb{Z}[\alpha_1,\dots, \alpha_r]$, implying that they are linearly independent over $\mathbb{Z}[a_1,\dots,a_n] \subset \mathbb{Z}[\alpha_1,\dots, \alpha_r]$

Let $F$ be an unmarked incompressible surface in $M$. For each $F$ we define a $\mathbb{Z}[\alpha_1,\dots, \alpha_r]$-linear functional, $\lambda_F$, such that  $\lambda_F(F) = 1$ and  $\lambda_F(F')=0$ if $F'$ is any other unmarked incompressible surface in $M$. Fix a root $\alpha$ of $p(x)$ (any root will give a suitable family of functionals), and define $\lambda_F$ as follows:

\begin{itemize}

\item $\lambda_F(S) = (-\alpha)^k \prod_\sigma \epsilon(S_\sigma) \prod_\tau \epsilon(T_\tau)$ if $S$ is a disjoint union of $F$, marked with $x^k$, with spheres $S_\sigma$ and compressible tori $T_\tau$.

\item $\lambda_F(S) = \prod_\sigma \epsilon(S_\sigma) \prod_\tau \epsilon(T_\tau)$ if  $S$ is a disjoint union of spheres $S_\sigma$ and compressible tori $T_\tau$.

\item $\lambda_F(S) = 0$ otherwise.

\end{itemize}

Note that since $M$ is irreducible all compressible tori compress down to spheres that bound balls, no matter which compressing disk is chosen.

We must show that the functionals respect the relations of the skein module.  Thus we must address the neck-cutting relation, the sphere relations and the dot reduction relation.

First we show the functionals respect the neck-cutting relation. The functionals are defined so that all surfaces that are compressible (excluding tori) are sent to zero. Therefore we must show that the result of compressing a surface is also sent to zero by the functionals.

By earlier work we have that 

\begin{equation*}
p'(x) =  nx^{n-1} - \sum_{j=0}^{n-2} a_{n-1-j} (j+1) x^{j} 
\end{equation*}

Consider

\begin{align*}
\lambda_F \left (\sum_{i=0}^{n-1}  \raisebox{-.3 in}{\includegraphics[height = .6 in]{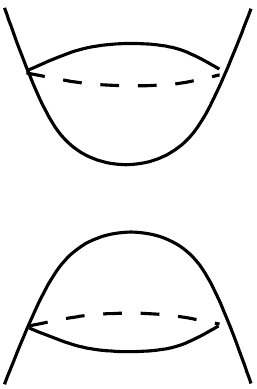}}\put(-18,20){$x^i$}\put(-24,-28){$x^{j-i}$} - \sum_{j=0}^{n-2} a_{n-1-j} \sum_{i=0}^{j}  \raisebox{-.3 in}{\includegraphics[height = .6 in]{cutneck2.pdf}}\put(-18,20){$x^i$}\put(-24,-28){$x^{j-i}$} \right ) &=n(-\alpha)^{n-1} - \sum_{j=0}^{n-2} a_{n-1-j} (j+1) (-\alpha)^{j} \\
&= p'(-\alpha) = 0
\end{align*}

It is also necessary to show that if $x^n$ is replaced by $a_1 x^{n-1} + \dots + a_n$ that the functionals respect this.

First note that

\begin{equation*}
0 = p'(-\alpha) = (-\alpha)^n - a_1 (-\alpha)^{n-1} - \dots - a_n,
\end{equation*}

which implies

\begin{equation*}
(-\alpha)^n =  a_1(- \alpha)^{n-1} + \dots + a_n.
\end{equation*}

Now we have

\begin{align*}
\lambda( a_1(x^{n-1}) + a_2 (x ^{n-2}) + \dots + a_n(1)) &= a_1 (-\alpha)^{n-1} + a_2 (-\alpha)^{n-2} + \dots + a_n  =  (-\alpha)^n \\
\\
&= \lambda(x^n).
\end{align*}

Therefore the functionals respect the ring. Now note the functionals respect the sphere relations by how they are defined.  By the work above they respect the neck-cutting relation. Since we have defined the appropriate functionals it is apparent that the unmarked incompressible surfaces are linearly independent.
\end{proof}

It is important to observe that while we only utilized one repeated root to define our functionals, we actually needed the fact that all roots were repeated. Otherwise we would have needed to address null-homologous  surfaces of genus greater than one in the definition of our functionals. Since all roots are repeated, all higher genus surfaces are equivalent to zero in the skein module, by Corollary \ref{thm: behavior of g^i}.

\subsection{Exploring $K_2(M)$}

In \cite{asaeda}, Asaeda and Frohman  required 2 to be invertible in the ring. Therefore, in order to build on their results, we will work over $\mathbb{Q}[a_1,a_2]$, rather than $\mathbb{Z}[a_1,a_2]$, for the rest of this section.

Recall that in the skein modules the surfaces are marked with elements of the ring. In $K_2(M)$ all surfaces can be written in terms of surfaces that are marked with $x$ to the first power, at most. Following the convention set forth by Bar-Natan, among others, we let a dotted surface represent a surface marked with an $x$.

\subsubsection{An Example}
\label{examplesection}


We will determine the skein module $K_2(S^2 \times S^1)[4a_2 + a_1^2 = 0]$.  Note that $S^2 \times S^1$ is not irreducible so we cannot apply Theorem \ref{linindtheorem}. In order to concisely do calculations in the skein module we introduce some new notation:

\begin{enumerate}

\item \nd will denote an unmarked sphere in $S^2 \times S^1$ that doesn't bound a ball,

\medskip

\item \od will denote a sphere marked with an $x$ in $S^2 \times S^1$ that also doesn't bound a ball,  

\medskip

\item \nd \nd is two parallel unmarked spheres,

\medskip

\item \od \nd is two parallel sphere where one is marked with and $x$ and one is not, etc.

\end{enumerate}

Unless noted otherwise, the spheres are always being viewed locally. That is, there may or may not be other sphere components of the surface in addition to the ones being viewed.

\begin{remark1}

We assume all surfaces are as simple as possible in terms of $x$. If there is an $x^2$ on a component simply replace all of them with $a_1x+a_2$ as follows:

\begin{center}
$\td = a_1 \od + a_2\nd$.
\end{center}

\end{remark1}

\begin{lemma}\label{relationsLemma}

We have the following relations on un-bounding spheres in $S^2 \times S^1$:

\begin{enumerate}

\item \begin{equation*}
\label{relation1}
1 = \od \od + a_2 \nd \nd
\end{equation*}

\item \begin{equation*}
\label{relation2}
\od \nd  = - \nd \od + a_1 \nd \nd
\end{equation*}

\item \begin{equation*}
  \nd\put(-12,-15){1}  \raisebox{.25 em}{\dots} \nd\put(-11,-15){n}  =  \nd\put(-11,-15){n} \nd\put(-11,-15){1}  \raisebox{.25 em}{\dots} \nd\put(-14,-15){n-1}
\end{equation*}

\item \begin{equation*}
\od \od \nd = \nd \od \od 
\end{equation*}

\item \begin{equation*}
\od \nd \nd =  \nd \nd \od
\end{equation*}

\item \begin{equation*}
\od \nd \dots \nd =  \frac{a_1}{2} \nd \nd \dots \nd \text{(exactly one sphere has a dot on left side)}
\end{equation*}

\end{enumerate}

\end{lemma}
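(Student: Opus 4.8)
The plan is to pull relations (1) and (2) out of one geometric observation, then get (3)--(5) by isotopy and (6) by a line of algebra. The observation is that in $S^2\times S^1$ a ball-bounding (``trivial'') sphere may be displayed as two parallel non-separating spheres joined by a tube. Concretely, write $S^2\times S^1 = S^2\times(\R/\Z)$, let $S = S^2\times\{0\}$ and $S' = S^2\times\{1/2\}$, delete a small disk $D\subset S^2$ from each, and reconnect them by the vertical tube $T=\partial D\times[0,1/2]$. The resulting surface $\Sigma$ is a single sphere, it bounds the ball $(S^2\setminus D)\times[0,1/2]\cong B^3$, and the core of $T$ bounds the disk $D\times\{1/4\}$ whose interior is disjoint from $\Sigma$. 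So $\Sigma$ may be neck-cut along $D\times\{1/4\}$: this restores $S$ and $S'$ to full parallel non-bounding spheres, and by the $n=2$ case of the neck-cutting relation of Section~\ref{sec: skein modules} it records on the two new capping disks the three terms ``$x$ on the $S$-side, $1$ on the $S'$-side'', ``$1$ on the $S$-side, $x$ on the $S'$-side'', and ``$-a_1$ times $1$ on each side'', together with any mark already present on $\Sigma$.

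Relations (1) and (2) follow by marking $\Sigma$ with $1$ or with $x$. When $\Sigma$ is unmarked, the sphere relations evaluate it to $\varepsilon(1)=0$, while neck-cutting rewrites it as $\od\,\nd + \nd\,\od - a_1\,\nd\,\nd$; equating gives relation (2). When $\Sigma$ carries a single $x$ (slid onto the $S$-side of $T$), it evaluates to $\varepsilon(x)=1$, while neck-cutting now rewrites it as $\td\,\nd + \od\,\od - a_1\,\od\,\nd$; substituting $\td = a_1\,\od + a_2\,\nd$ from the dot-reduction relation, the two $a_1\,\od\,\nd$ terms cancel and what remains is $1 = \od\,\od + a_2\,\nd\,\nd$, which is relation (1). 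Both arguments take place in a neighborhood of the two spheres, so any further sphere components elsewhere come along unchanged.

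Relations (3), (4), and (5) are isotopies. A stack of parallel non-bounding spheres sits in $S^2\times(\R/\Z)$ as $\{\,S^2\times\{t_i\}\,\}$, and sliding one of them once around the $\R/\Z$ factor --- crossing the level $t=0$ but no other sphere --- realizes the cyclic rotation asserted in each statement, carrying any $x$-mark along with it. In particular the position of a lone dot inside an otherwise unmarked stack is immaterial.

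Relation (6) then follows from (2) together with (3)--(5) and invertibility of $2$ in $\mathbb{Q}[a_1,a_2]$: for a stack of $k\ge 2$ parallel non-bounding spheres with exactly one dotted, write $D$ for its class (independent, by the previous paragraph, of which sphere is dotted) and $U$ for the all-unmarked class. Applying relation (2) to the dotted sphere and an adjacent undotted one --- the other $k-2$ left unmarked --- turns the local $\od\,\nd$ into $-\nd\,\od + a_1\,\nd\,\nd$, and since $\nd\,\od$ is again a one-dot configuration this reads $D = -D + a_1 U$; hence $2D = a_1 U$ and $D = \frac{a_1}{2}\,U$. The main obstacle is the opening step --- recognizing the decomposition of a trivial sphere in $S^2\times S^1$ into two non-bounding spheres plus a tube, and then carrying the $n=2$ neck-cutting formula and dot reduction through precisely enough to reproduce the coefficients $-a_1$, $a_2$, and $\frac{a_1}{2}$ in the statement; relations (3)--(6) are routine once (1) and (2) are in hand.
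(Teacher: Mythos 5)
Your proof is correct, and it tracks the paper's proof closely for items (1), (2), (3), and (6): the paper also tubes two parallel non-bounding spheres together to form a trivial sphere, reads off relations (1) and (2) by choosing the trivial sphere to be marked or unmarked and neck-cutting, gets (3) from cyclic permutability in $S^2\times S^1$, and derives (6) from (2) using that the position of a lone dot is immaterial together with invertibility of $2$. Where you differ is in items (4) and (5): the paper establishes them algebraically by repeated application of relation (2) (and, for (4), of relation (5)), reducing both to telescoping cancellations; you instead observe that both are literally cyclic rotations of a marked stack and hence are realized by the same ambient isotopy that proves (3). Your route is shorter, avoids the paper's slightly awkward forward reference to (5) inside the proof of (4), and makes explicit that (4) and (5) are nothing but (3) applied to marked configurations --- a fact the paper uses implicitly anyway when it passes from $\od\nd\dots\nd = -\nd\od\dots\nd + a_1\nd\nd\dots\nd$ to $2\od\nd\dots\nd = a_1\nd\nd\dots\nd$ in item (6). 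What the paper's algebraic derivation buys is an independent consistency check that (4) and (5) are forced by (2) alone, without invoking the extension of (3) to decorated spheres; both arguments are valid. One small imprecision on your side: ``sliding one of them once around the $\R/\Z$ factor'' should be read as sliding a boundary sphere of the stack through the empty region past $t=0$ to the far end of the stack, not a full loop, but it is clear that is what you intend.
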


\begin{proof}

\begin{enumerate}

\item Consider two parallel spheres that do not bound a ball. If they are tubed together the new sphere bounds a ball, but now compressing the tube we have just placed yields the two parallel spheres. If the sphere that bounds a ball is marked with a dot we get the desired relation.

\item As in 1, if the new sphere is unmarked we get this relation.

\item This relation comes from the fact that we can cyclically permute the spheres in $S^2 \times S^1$.

\item By repeated applications of relation 2 and relation 5, we have:

\begin{equation*}
\od \od \nd = - \od \nd \od + a_1 \od \nd \nd = \nd \od \od - a_1 \nd \nd \od + a_1 \od \nd \nd = \nd \od \od.
\end{equation*}

\item Again, by repeated applications of relation two we have:

\begin{align*}
  \od \nd \nd &= -    \nd \od \nd  + a   \nd \nd \nd =    \nd \nd \od - a  \nd \nd \nd  + a \nd \nd \nd  \\
\\
&=   \nd \nd \od .
\end{align*}

\item

\begin{equation*}
\od \nd \dots \nd = - \nd \od \dots \nd + a_1 \nd \nd \dots \nd 
\end{equation*}

implies that 

\begin{equation*}
2 \od \nd \dots \nd =  a_1 \nd \nd \dots \nd 
\end{equation*}

and since 2 is invertible, we arrive at the desired relation.

\end{enumerate}

\end{proof}

\begin{definition}
An \textbf{odd (even) configuration} of spheres is a surface that consists entirely of an odd (even) number of parallel un-bounding spheres each marked with at most one dot and nothing else.
\end{definition}


\begin{definition}
An even configuration in \textbf{standard position} is in the following form:

\bigskip
\begin{center}
\od \od \dots \od \nd \od \dots \nd \od \nd \nd \dots \nd
\end{center}

An odd configuration in \textbf{standard position} is in the following form:

\bigskip
\begin{center}
\od \od \dots \od \nd \dots \nd
\end{center}

In essence, a configuration in standard position is one where the marked spheres are as close together as possible.

\end{definition}

\begin{lemma}

All configurations can be placed in a unique standard position using relations \ref{relationsLemma}.4 and \ref{relationsLemma}.5.

\end{lemma}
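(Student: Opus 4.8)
**Proof proposal for the Lemma: "All configurations can be placed in a unique standard position using relations \ref{relationsLemma}.4 and \ref{relationsLemma}.5."**

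The plan is to prove existence and uniqueness separately. For \textbf{existence}, I would argue that relations \ref{relationsLemma}.4 and \ref{relationsLemma}.5 are precisely "commutation moves" that let a marked sphere ($\od$) slide past two adjacent spheres at once. Specifically, relation \ref{relationsLemma}.4 ($\od\od\nd = \nd\od\od$) says that two dotted spheres together commute with a single undotted sphere, and relation \ref{relationsLemma}.5 ($\od\nd\nd = \nd\nd\od$) says that a single dotted sphere commutes with two undotted spheres. I would first treat the even and odd cases. In the odd case, a configuration has an odd number of spheres; using relation \ref{relationsLemma}.5 repeatedly, any single dotted sphere can be moved left past pairs of undotted spheres, and using relation \ref{relationsLemma}.4 pairs of dotted spheres can be moved left past single undotted spheres, so by an induction on the number of "inversions" (pairs where a dotted sphere sits to the right of an undotted sphere) one drives the configuration to the form $\od\cdots\od\nd\cdots\nd$. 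One must check that when the number of dots is odd, a single leftover dotted sphere can always be brought to the front: if there are undotted spheres to its left, there must be at least two of them available somewhere (or one can first use \ref{relationsLemma}.4 to shuffle a pair of dots leftward, freeing up the needed parity), and this is where the oddness of the \emph{total} count is used. The even case is analogous but the target form keeps the middle undotted spheres interleaved in the prescribed pattern; again an inversion-count induction works.

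For \textbf{uniqueness}, the cleanest route is to observe that relations \ref{relationsLemma}.4 and \ref{relationsLemma}.5 each preserve two invariants of a configuration of $k$ parallel un-bounding spheres: the total number of spheres $k$, and the number of dotted spheres $d$ (both relations simply permute dots among a fixed set of sphere positions without creating or destroying dots). Since the standard position of a configuration is completely determined by the pair $(k,d)$ — in the even case the dots are packed against the left and then against the right in the specified pattern, in the odd case all $d$ dots are packed at the left — any two standard-position configurations reachable from the same starting configuration have the same $(k,d)$ and hence are literally identical. Thus the standard position is unique. I would phrase this as: "existence shows every configuration reaches \emph{a} standard position; the invariants $(k,d)$ show it reaches \emph{only one}."

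The \textbf{main obstacle} I anticipate is the existence argument in the odd case when attempting to move a lone unpaired dotted sphere all the way to the front past an odd number of undotted spheres: relations \ref{relationsLemma}.4 and \ref{relationsLemma}.5 only move dots past spheres \emph{two at a time}, so one has to argue carefully that the parities always cooperate. The resolution is that because the total number of spheres is odd and we are consolidating all dots to the left, the block of undotted spheres to the right of the dotted block always has an appropriate parity to let the moves go through — but making this bookkeeping rigorous (rather than hand-wavy) is the delicate part. A clean way to handle it is to set up the induction on the statistic $\mathrm{inv} = \sum (\text{positions of undotted spheres lying left of some dotted sphere})$ and show each application of \ref{relationsLemma}.4 or \ref{relationsLemma}.5 strictly decreases $\mathrm{inv}$ while a non-standard configuration always admits at least one such applicable move; termination then gives existence without ever needing an explicit parity case-split.
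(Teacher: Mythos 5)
Your uniqueness argument has a genuine gap. You claim that relations \ref{relationsLemma}.4 and \ref{relationsLemma}.5 preserve only the total number of spheres $k$ and the total number of dots $d$, and that the pair $(k,d)$ determines the standard form; in the even case this is too coarse an invariant. For $k=4$, $d=2$ both $\od\od\nd\nd$ and $\od\nd\od\nd$ are standard-position configurations with the same $(k,d)$, yet they lie in different equivalence classes under relations \ref{relationsLemma}.4 and \ref{relationsLemma}.5 (and under cyclic isotopy, which is freely available in $S^2\times S^1$), so the standard form would not be ``unique'' by your invariant. The observation you are missing, and the one the paper's proof exploits, is that each of relations \ref{relationsLemma}.4 and \ref{relationsLemma}.5 slides a single dot past exactly two spheres and therefore preserves the position-parity of each individual dot. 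When $k$ is even the cyclic arrangement of the $k$ spheres carries a bipartite $2$-coloring, and the unordered pair of dot-counts in the two color classes is preserved by both relations and by rotation; this finer invariant separates $\od\od\nd\nd$ (counts $\{1,1\}$) from $\od\nd\od\nd$ (counts $\{2,0\}$), and it is that pair, not $(k,d)$, that pins down the even standard form. When $k$ is odd the cycle is not bipartite, no such coloring exists, and $d$ alone does determine the standard form, which is why the paper treats the two parities separately.

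Your existence sketch is reasonable in outline, but the ``main obstacle'' you worry about, namely a lone dot blocked by an odd run of undotted spheres, dissolves once you remember that the spheres in $S^2\times S^1$ are cyclically ordered rather than linearly ordered. On a cycle of odd length $k$ a step of $\pm 2$ generates all of $\mathbb{Z}/k\mathbb{Z}$, so a lone dot can always be walked around to join the dotted block by going the appropriate way around; the paper's proof implements exactly this by looking at the two gaps flanking a group of dots and closing whichever one has even length. So existence goes through once you work cyclically, but your uniqueness invariant is genuinely insufficient and must be replaced, in the even case, by the refined $2$-coloring count.
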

\begin{proof}
First we will consider even configurations. For an even configuration to be in standard position it is necessary to have only one gap between marked spheres of more than one unmarked sphere. If the configuration has two such gaps we can eliminate one by repeated applications of \ref{relationsLemma}.5.  If, after eliminating all such gaps, the configuration is still not in standard position it is because there are adjacent dotted spheres surrounded by spheres where every other sphere is dotted. By applying relation \ref{relationsLemma}.4 we can move all the spheres where each sphere is dotted next to each other and now the configuration is in standard position.

Now we must address uniqueness. Since we are dealing with even configuration we can divide the spheres into two sets, where two spheres are in the same set if they are separated by an odd number of spheres. By the fact we only used relations \ref{relationsLemma}.4 and \ref{relationsLemma}.5 the standard position is completely determined by the number of marked spheres in each set.

For an odd configuration to be in standard position it is necessary to have only one gap between marked spheres. Consider if there are two such gaps. If one of the gaps consists of an even number of unmarked spheres then we can eliminate it by \ref{relationsLemma}.5. Otherwise we can move the dot in the opposite direction by using a combination of \ref{relationsLemma}.4 and \ref{relationsLemma}.5. It is then possible to place it next to a dotted sphere since we are in an odd configuration. Uniqueness follows by the fact the standard position is completely determined by the number of marked spheres.

\end{proof}

\begin{lemma}\label{oddzero}
All odd configurations with no marked spheres are equal to 0.
\end{lemma}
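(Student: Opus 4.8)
The plan is as follows. An odd configuration with no marked spheres is, by definition, just a collection of $m$ parallel un-bounding spheres, all unmarked, for some odd $m \geq 1$; write $C_m^{(0)}$ for this surface, and more generally $C_m^{(d)}$ for the odd configuration of $m$ parallel spheres carrying $d$ dots. By the standard-position lemma stated just above, $C_m^{(d)}$ is a well-defined element of the skein module (two odd configurations of $m$ spheres that carry the same number of dots represent the same element), and relation \ref{relationsLemma}.6 reads $C_m^{(1)} = \tfrac{a_1}{2}\,C_m^{(0)}$. I will prove $C_m^{(0)} = 0$ for every odd $m$ by deriving, for each odd $m \geq 3$, the recursion $\tfrac14(4a_2 + a_1^2)\,C_m^{(0)} = C_{m-2}^{(0)}$: since $4a_2 + a_1^2 = 0$ in our coefficient ring, the left-hand side is $0$, so $C_{m-2}^{(0)} = 0$, and letting $m$ range over the odd integers $\geq 3$ gives $C_1^{(0)} = C_3^{(0)} = C_5^{(0)} = \dots = 0$, which is the assertion.

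The first step is to express $C_m^{(2)}$ in terms of $C_m^{(0)}$ for $m \geq 3$. Put $C_m^{(2)}$ in standard position, so its two dots sit on adjacent spheres, and apply relation \ref{relationsLemma}.2 to slide one of these dots onto the neighbouring unmarked sphere; this rewrites $C_m^{(2)}$ as $-D + a_1 E$, where $D$ is the configuration whose two dots are separated by a single unmarked sphere and $E$ is the configuration carrying a single dot. Now $D$ again carries two dots, so $D = C_m^{(2)}$ by the standard-position lemma, while $E = C_m^{(1)} = \tfrac{a_1}{2}\,C_m^{(0)}$ by relation \ref{relationsLemma}.6. Hence $C_m^{(2)} = -C_m^{(2)} + \tfrac{a_1^2}{2}\,C_m^{(0)}$, and dividing by $2$ (valid since we work over $\mathbb{Q}[a_1,a_2]$) gives $C_m^{(2)} = \tfrac{a_1^2}{4}\,C_m^{(0)}$.

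The second step uses relation \ref{relationsLemma}.1, $1 = \od \od + a_2 \nd \nd$, rearranged as $a_2\, \nd \nd = 1 - \od \od$. Applying this locally to two of the $m$ spheres of $C_m^{(0)}$, with the other $m-2$ spheres as bystanders, yields $a_2\, C_m^{(0)} = C_{m-2}^{(0)} - F$, where $F$ is the $m$-sphere configuration obtained by dotting those two spheres; since $F$ carries two dots, $F = C_m^{(2)}$. Substituting the formula from the first step,
\[
a_2\, C_m^{(0)} = C_{m-2}^{(0)} - \tfrac{a_1^2}{4}\, C_m^{(0)}, \qquad\text{hence}\qquad \tfrac14\bigl(4a_2 + a_1^2\bigr)\, C_m^{(0)} = C_{m-2}^{(0)},
\]
which is the recursion promised above; imposing $4a_2 + a_1^2 = 0$ completes the proof.

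The step that requires the most care is the repeated appeal to the standard-position lemma: one must check that each intermediate surface appearing above --- the configuration $D$ with two dots separated by one sphere, the two-adjacent-dot configuration, and the single-dot configuration $E$ --- is genuinely an \emph{odd} configuration in the sense of the definition, so that the standard-position lemma and relation \ref{relationsLemma}.6 legitimately apply, and that ``applying relation \ref{relationsLemma}.1 locally'' inside a larger parallel family is allowed; both are routine given the stated conventions on viewing spheres locally. Note that the argument uses only the local relations in $S^2 \times S^1$ and never irreducibility of the ambient manifold, and that the hypothesis $4a_2 + a_1^2 = 0$ is exactly what drives the cancellation --- it is also this hypothesis, via Corollary \ref{thm: behavior of g^i}, that forces every closed surface of genus $\geq 2$ to vanish in the skein module.
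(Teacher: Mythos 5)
Your proposal is correct and follows essentially the same route as the paper's proof: both arguments derive the recursion $4\,C_{m-2}^{(0)} = (4a_2+a_1^2)\,C_m^{(0)}$ from relations \ref{relationsLemma}.1, \ref{relationsLemma}.2, \ref{relationsLemma}.6 and the standard-position lemma, and conclude from $4a_2+a_1^2=0$ (the paper writes it going from one local sphere up to three, $4\nd = (4a_2+a_1^2)\nd\nd\nd$, with bystander spheres implicit; you write it going from $m$ down to $m-2$). The only organizational difference is that you isolate the intermediate identity $C_m^{(2)} = \tfrac{a_1^2}{4}\,C_m^{(0)}$ as a separate step, whereas the paper folds that computation inline.
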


\begin{proof}

\begin{align*}
\nd &= \nd \od \od + a_2 \nd \nd \nd                                           & \text{by relation } \ref{relationsLemma}.1 \\ 
\\
      &= -\od \nd \od + a_1 \nd \nd \od + a_2 \nd \nd \nd & \text{by relation } \ref{relationsLemma}.2 \\
      \\
      &=-(\nd - a_2\nd \nd \nd) + \frac{a_1^2}{2} \nd \nd \nd + a_2 \nd \nd \nd & \text{by relations } \ref{relationsLemma}.2\text{ and }\ref{relationsLemma}.6\\
      \\
      &= - \nd + \frac{a_1^2}{2} \nd \nd \nd+  2a_2 \nd \nd \nd\\
\end{align*}

So, $2 \nd = 2a_2 \nd \nd \nd + \frac{a_1^2}{2} \nd \nd \nd$ and thus

\bigskip

$4 \nd = (4a_2+a_1^2) \nd \nd \nd =0$, since we assumed $4a_2+a_1^2=0$. Thus we have that $\nd = 0$.

\end{proof}

\bigskip

We now define an algorithm for reducing the configurations:

\begin{enumerate}

\item Evaluate all trivial tori and spheres.

\item Using the neck-cutting relation remove all handles from non-bounding spheres.

\item Using relations \ref{relationsLemma}.4 and \ref{relationsLemma}.5 put the configuration into standard position.

\item Using relation \ref{relationsLemma}.2 move all of the dots on parallel spheres as close together as possible.

\item Adjacent dotted spheres annihilate each other by relation \ref{relationsLemma}.1.

\item Using relation \ref{relationsLemma}.6 we are able to replace configurations with one dotted sphere with ones with no dotted spheres.

\end{enumerate}

By applying the algorithm and Lemma \ref{oddzero} we can see that the surfaces are spanned by the collection of even unmarked spheres, one dotted sphere and the empty surface. We wish to show that if $4a_2 + a_1^2=0$ then this collection is linearly independent.

\bigskip

The first step to showing linear independence is to define linear functionals on the generators:

$$
\lambda_k(S) = 
\left \{
\begin{tabular}{cc}
1 & if $S$ is $2k$ parallel unmarked spheres that don't bound a ball \\
0 & else \\
\end{tabular}
\right .
$$

$$
\lambda_d(S) = 
\left \{
\begin{tabular}{cc}
1 & if $S$ is a dotted sphere that doesn't bound a ball \\
0 & else \\
\end{tabular}
\right .
$$

Using the algorithm each linear functional can be extended to a map on any surface in $S^2 \times S^1$. We must show the functionals together with the algorithm are well-defined on the skein module, that is to say that they respect the relations. 

By how the functionals are defined it is clear that they respect the sphere relations. Now we must show that both sides of the neck-cutting relation are respected by the functionals. By the definition of the algorithm, the functionals behave well with regards to neck-cutting, with the exception of when a trivial sphere becomes two non-bounding spheres. These situations are related to relations \ref{relationsLemma}.1 and \ref{relationsLemma}.2 and we address them below.


\medskip

Relation 1: Consider the case where one side of the neck-cutting relation is a bounding unmarked sphere and the other side is the result of compressing the sphere to yield:

\begin{center}
$\od \nd + \nd \od - a_1 \nd \nd$
\end{center}

If there is an even number of spheres then note that either \od \nd or \nd \od will need to be moved to put the dots as close together as possible. Without loss of generality we have

\begin{center}
$\od \nd + \nd \od - a_1 \nd \nd = -\nd \od + a_1 \nd \nd + \nd \od - a_1 \nd \nd= 0$
\end{center}

If there is an odd number of spheres then by Lemma \ref{oddzero} all terms are zero.

\begin{center}
$\od \nd + \nd \od - a_1 \nd \nd = 2\nd \od + a_1 \nd \nd=0$
\end{center}


\medskip

Relation 2 : Consider the case where one side of the neck-cutting relation is a bounding marked sphere and the other is the result of compressing the sphere:

\begin{center}
$\od \od + a_2 \nd \nd$
\end{center}

The first summand has one more pair, so at some point the pair is replaced by

\begin{center}
$(1 - a_2\nd \nd) + a_2 \nd \nd = 1= \text{evaluation of marked sphere} $
\end{center}

Thus we have that 

\begin{equation*}
K_2(S^2 \times S^1)[4a_2 + a_1^2 = 0] \cong R[x] \oplus Re,
\end{equation*}

where $x^k$ represents $2k$ parallel unmarked spheres and $e$ represents a single marked sphere.

\subsubsection{A Partial Converse}

We were able to prove the linear independence of the unmarked incompressible surfaces of $K_n(M)$ when $M$ is irreducible for any $n$, as long as every root of $p(x)$ is repeated. When $n=2$ we are able to prove a partial converse.

In order to prove the converse we use notation similar to that of Section \ref{examplesection}. We will be dealing with an incompressible surface that fibers over a circle. A vertical line will denote one copy of this surface and multiple lines will denote multiple surfaces. If the lines are decorated with a dot than that particular surface is marked with an $x$.

\begin{theorem}\label{irredclass}

Let $M$ be an irreducible 3-manifold such that some incompressible surface in $M$ fibers over a circle.  The unmarked surfaces in $K_2(M)$ are linearly independent if and only if $4a_2+a_1^2 = 0$.

\end{theorem}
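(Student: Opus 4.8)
The plan is to prove the two directions separately. For the ``if'' direction, if $4a_2+a_1^2=0$ then the discriminant of $p(x)=x^2-a_1x-a_2$ vanishes, so its unique root $a_1/2$ is repeated; since $M$ is irreducible, Theorem \ref{linindtheorem} applies and gives that the unmarked incompressible surfaces of $K_2(M)$ are linearly independent. For the converse I would prove the contrapositive: assuming $4a_2+a_1^2\neq 0$, I would exhibit an explicit nontrivial $\mathbb{Q}[a_1,a_2]$-linear relation between two distinct unmarked incompressible surfaces, mimicking the computation of Section \ref{examplesection}. Let $F\subset M$ be the incompressible surface fibering $M$ over a circle, so $M$ is the mapping torus of some $\varphi\colon F\to F$ and the fibers form an $S^1$-family of pairwise-disjoint parallel copies of $F$; write $h$ for the genus of $F$, noting $h\geq 1$ since irreducibility forces $F\neq S^2$. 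As in Section \ref{examplesection} I would picture a configuration of finitely many parallel copies of $F$ lying in distinct fibers, each marked by $1$ or by $x$, as a family of vertical lines (dotted when marked by $x$). One structural input is the analogue of relation \ref{relationsLemma}.3: since rotating the base circle is an ambient isotopy of $M$, a configuration of $k$ evenly-spaced parallel copies is invariant under cyclically permuting the copies and their markings.

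The crux is the replacement for relations \ref{relationsLemma}.1 and \ref{relationsLemma}.2. If two parallel copies $F_0,F_1$ are adjacent, with no other copy between them, then tubing them by a vertical unknotted tube in the product region between them yields a closed surface $\Sigma=F_0\#F_1$ that bounds, on one side, a handlebody of genus $2h$, namely $(F\setminus D^2)\times I$; here one uses that the region between two disjoint parallel fibers of a mapping torus is a product $F\times I$, together with the fact that $S\times I$ is a handlebody for every compact surface $S$ with nonempty boundary. Neck-cutting $\Sigma$ along a complete system of meridian disks of that handlebody --- each a nonseparating compression, i.e.\ a multiplication of the (single) component by the genus-reduction term $g=2x-a_1$ --- reduces $\Sigma$ to $g^{2h}=(4a_2+a_1^2)^h$ times a sphere bounding a ball. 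Hence, unmarked, $\Sigma$ evaluates to $\varepsilon\big((4a_2+a_1^2)^h\big)[\emptyset]=0$, whereas carrying one $x$ it evaluates to $\varepsilon\big(x(4a_2+a_1^2)^h\big)[\emptyset]=(4a_2+a_1^2)^h[\emptyset]$. Applying the ordinary $n=2$ neck-cutting relation to the cylinder (the tube) inside $\Sigma$ and comparing with these two evaluations then produces the local relations I need on an adjacent pair of parallel copies: the analogue of relation \ref{relationsLemma}.2 (from the unmarked $\Sigma$) and the analogue of relation \ref{relationsLemma}.1 in which the scalar $1$ is replaced by $(4a_2+a_1^2)^h$ (from the $x$-marked $\Sigma$). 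That single replacement of ``$1$'' by ``$(4a_2+a_1^2)^h$'' is what distinguishes this situation from $S^2$ inside $S^2\times S^1$, where the analogous tube bounds a ball, and is precisely where the hypothesis $4a_2+a_1^2\neq 0$ will enter.

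The rest follows Lemma \ref{oddzero}. Using the relation-\ref{relationsLemma}.2 analogue together with the cyclic symmetry and the invertibility of $2$ in $\mathbb{Q}[a_1,a_2]$, a short induction on the number of dots shows that, in a configuration of three equally spaced parallel copies, putting dots on two of the three copies yields $\tfrac{a_1^2}{4}$ times the undotted triple. Using instead the relation-\ref{relationsLemma}.1 analogue (disjoint-unioned with the third copy), that same two-dotted triple equals $(4a_2+a_1^2)^h$ times a single copy of $F$ minus $a_2$ times the undotted triple. Equating the two expressions gives, in $K_2(M)$,
\[
(4a_2+a_1^2)^h\,[F]\;=\;\frac{4a_2+a_1^2}{4}\,[F\sqcup F\sqcup F].
\]
Since $\mathbb{Q}[a_1,a_2]$ is an integral domain and $4a_2+a_1^2\neq 0$, both coefficients are nonzero; as $[F]$ and $[F\sqcup F\sqcup F]$ are distinct unmarked incompressible surfaces, they are linearly dependent, which completes the contrapositive and hence the theorem.

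I expect the main obstacle to be the key fact of the second paragraph and its two evaluations: recognizing that tubing two parallel copies of the incompressible fiber produces a surface bounding a handlebody (not a ball) of genus exactly $2h$ with $h\geq 1$, so that the unmarked version vanishes while the $x$-marked version is the scalar $(4a_2+a_1^2)^h$, which is nonzero precisely when $4a_2+a_1^2\neq 0$. Everything afterward is bookkeeping modeled on Section \ref{examplesection}; one should still check carefully that the relations used are genuinely local, so that they may be applied to an adjacent pair sitting inside a larger configuration (in particular that the handlebody and its meridian disks meet no other component), and that when $4a_2+a_1^2=0$ the displayed identity degenerates harmlessly to $0=0$, consistent with Theorem \ref{linindtheorem}, under whose hypotheses $[F\sqcup F\sqcup F]$ is itself nonzero.
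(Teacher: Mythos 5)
Your proposal is correct and follows essentially the same route as the paper: right-to-left via Theorem \ref{linindtheorem}, and left-to-right by tubing two adjacent parallel fibers, using that the tube cobounds on one side the genus-$2h$ handlebody $(F\setminus D^2)\times I$ to evaluate the $x$-marked tubed surface to $(4a_2+a_1^2)^h$ and the unmarked one to $0$, and then playing these against the tube-cutting relations (with cyclic symmetry) to produce $(4a_2+a_1^2)[F^{\sqcup 3}]=4(4a_2+a_1^2)^h[F]$. You supply somewhat more topological justification than the paper for why the tubed surface bounds a handlebody, and (as a small bonus) your exponent $h$ on $(4a_2+a_1^2)$ is the correct one, whereas the paper's $(4a_2+a_1^2)^{2i}$ repeats an off-by-a-factor slip carried over from Section \ref{subsec: neck-cutting in universal sln} (there $g^{2i}=(g^2)^i=(4a_2+a_1^2)^i$, not $(4a_2+a_1^2)^{2i}$); either way the quantity is nonzero under the hypothesis $4a_2+a_1^2\neq 0$, so the argument goes through.
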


\begin{proof}

Right to left is by Theorem \ref{linindtheorem}.  We will show the contrapositive of left to right by showing that if $4a_2+a_1^2 \neq 0$, then the unmarked surfaces are linearly dependent.

Let $i$ be the genus of the incompressible surface that fibers over a circle in $M$. Recall the following results from Section \ref{subsec: neck-cutting in universal sln}:

\begin{itemize}

\item 
${\Sigma}_{2i} = \text{\{Two parallel genus $i$ surfaces tubed together\}} = \od \nd + \nd \od - a_1 \nd \nd$,

\item 
$\dot{\Sigma}_{2i} = \text{\{Two parallel genus $i$ surfaces tubed together, marked with $x$\}} = \od \od + a_2 \nd \nd$.

\end{itemize}

\medskip

Also, note $0 = \Sigma_{2i} = \od \nd + \nd \od - a_1 \nd \nd$, which yields the relation

\begin{equation}\label{convrelation}
a_1 \nd \nd = \od \nd + \nd \od
\end{equation}

By repeated applications of relation \ref{convrelation} above, we have

\begin{equation*}\begin{split}
a_1^2 \nd \nd \nd  &  = a_1 \nd \nd \od + a_1 \nd \od \nd = 2 a_1 \nd \nd \od = 2 \od \nd \od + 2 \nd \od \od \\
\\
&= 4 \nd \od \od = 4 \dot{\Sigma}_{2i} \nd - 4a_2 \nd \nd \nd, 
\end{split}
\end{equation*}

thus, $(4a_2+a_1^2)\nd\nd\nd =4\dot{\Sigma}_{2i} \nd = 4(4 a_2 + a_1^2)^{2i}\nd$, so the unmarked surfaces are linearly dependent.


\end{proof}

Theorem \ref{irredclass} is the only partial converse that we were able to prove for any $n$. Thus, it is an open question as to exactly when the unmarked surfaces are linearly independent for $n > 2$.











\section{Appendix: The Genus Reduction Matrix}
\label{sec: genus reduction matrix}

In Subsection \ref{subsec: neck-cutting in universal sln} we explicitly calculated all powers of our genus reduction term $g$ in the universal $sl(n)$ skein module when $n=2$, and alluded to the fact that this was difficult to do in complete generality for higher $n$.  Here we tackle that problem using linear algebra, interpreting $g$ as a $R$-linear operator from $A$ to $A$.  Choosing the standard ordered basis $\lbrace 1,x,...,x^{n-1} \rbrace$, we may  write $g$ as an $n \times n$ matrix $G_n \in Mat_n(R)$ (where the subscript in $G_n$ corresponds to the rank of the $sl(n)$ extension).

Note that, in terms of our chosen basis, the first column of $G_n$ directly corresponds to our genus-reduction term.  The j\textsuperscript{th} column similarly corresponds to $x^{i-1} g_n$, after reducing modulo $p(x)=x^n - a_1 x^{n-1} - ... - a_{n-1} x - a_n$.  Also note that what our closed surfaces actually evaluate to via our Frobenius form correspond to the final row of the matrix, so that we can immediately determine the evaluation of a torus decorated by $x^k$ as the $(n,k+1)$ entry of $G$.  Our first proposition gives a recursive formula for determining the $(i,j)$ entry of $G_n$, for any $n \geq 2$.

\begin{proposition}
\label{thm: Gn recursive formula}
The entries of $G_n = \left[ \left[ g_{i,j} \right] \right]$, for any $n \geq 2$ are defined recursively as follows:\\
\begin{center}
$g_{i,1} = -i a_{n-i}$ (for $i < n$)\\
$g_{n,1} = n$\\
$g_{i,j} = a_{n-i+1} g_{n,j-1} + g_{i-1,j-1}$ (for $j > 1, i>1$)\\
$g_{i,j} = a_{n-i+1} g_{n,j-1}$ (for $j > 1, i=1$)
\end{center}
\end{proposition}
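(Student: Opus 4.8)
The plan is to prove the formula by induction on the column index $j$. As noted in the paragraph preceding the proposition, the $j$-th column of $G_n$ lists the coordinates of $x^{j-1}g$ in the ordered basis $\{1,x,\dots,x^{n-1}\}$; since $x^{j-1}g = x\cdot(x^{j-2}g)$, each column is obtained from the previous one by applying the $R$-linear endomorphism ``multiplication by $x$'' of $A$. So the whole statement reduces to (i) identifying column $1$ and (ii) describing how multiplication by $x$ acts on coordinate vectors, and then matching coefficients.

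For the base case $j=1$, column $1$ is just $g$ itself. By Equation \eqref{eq: g} we have $g = p'(x) = nx^{n-1} - (n-1)a_1x^{n-2} - \dots - 2a_{n-2}x - a_{n-1}$, so the coefficient of $x^{i-1}$ in $g$ equals $n$ when $i=n$, and equals $-(n-k)a_k$ with $k=n-i$, i.e. $-i\,a_{n-i}$, when $1\le i\le n-1$. These are precisely the claimed values $g_{n,1}=n$ and $g_{i,1}=-ia_{n-i}$.

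For the inductive step, I would record the effect of multiplication by $x$ once and for all: if $v=\sum_{i=1}^n v_i x^{i-1}\in A$, then reducing $x^n = a_1x^{n-1}+\dots+a_{n-1}x+a_n$ modulo $p(x)$ gives
\[
x v = v_n a_n + \sum_{i=2}^{n} \left( v_{i-1} + v_n a_{n-i+1} \right) x^{i-1}.
\]
Applying this with $v$ equal to the $(j-1)$-st column of $G_n$, so that $v_i = g_{i,j-1}$, the $x^0$-coordinate of $xv$ is $a_n g_{n,j-1}$ (which is $a_{n-i+1}g_{n,j-1}$ at $i=1$) and, for $2\le i\le n$, the $x^{i-1}$-coordinate is $g_{i-1,j-1}+a_{n-i+1}g_{n,j-1}$. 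These are exactly the two recursive clauses for $j>1$, the second of which automatically includes the row $i=n$. By induction this establishes the formula for all columns.

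There is no serious obstacle here; the proof is essentially bookkeeping. The only points requiring care are the two competing index shifts --- the reindexing $k\mapsto n-k$ forced by writing $x^n$ as $\sum_k a_k x^{n-k}$, and the offset between ``exponent of $x$'' and ``row index'' --- together with the degenerate behavior of the top row ($i=1$), which receives only the constant-term contribution $a_n g_{n,j-1}$ with no shifted predecessor. I would present the displayed multiplication-by-$x$ identity, justify it in one line, and then close by the induction on $j$ described above.
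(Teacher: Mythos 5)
Your proof is correct and takes essentially the same approach as the paper: read off column~$1$ from $g = p'(x)$, then observe that column~$j$ arises from column~$j-1$ by the $R$-linear ``multiplication by $x$'' map and reduce modulo $p(x)$ to match coefficients. The only cosmetic difference is that you package the step as an explicit induction on $j$ via a reusable multiplication-by-$x$ identity, whereas the paper performs the same reduction in-line on the column entries.
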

\begin{proof}
The first two lines follow from the expression for $g$ that we already exhibited at the beginning of Subsection \ref{subsec: neck-cutting in universal sln}.  As for the last two lines, we obtain the j\textsuperscript{th} column of $G_n$ from the $(j-1)^{th}$ column via multiplication by $x$.  Working modulo $(x^n - a_1 x^{n-1} - ... - a_n)$ we then have:\\
$x*(g_{1,j-1} + g_{2,j-1}x + ... + g_{n-1,j-1}x^{n-2} + g_{n,j-1}x^{n-1}) = g_{1,j-1}x + g_{2,j-1}x^2 + ... + g_{n-1,j-1}x^{n-1} + g_{n,j-1}x^{n} = g_{1,j-1}x + g_{2,j-1}x^2 + ... + g_{n-1,j-1}x^{n-1} + g_{n,j-1}(a_1 x^{n-1} + a_2 x^{n-2} + ... + a_{n-1} x + a_n) = (a_n g_{n,j-1}) + (a_{n-1} g_{n,j-1} + g_{1,j-1}) x + ... + (a_1 g_{2,j-1} + g_{n-1,j-1}) x^{n-1}$
\end{proof}

When our recursive relation it is then easy to produce $G_n$ for small $n$:

\[ G_2 =
\begin{bmatrix}
-a_1 & 2 a_2 \\
2 & a_1
\end{bmatrix}
\]

\[ G_3 =
\begin{bmatrix}
-a_2 & 3 a_3 & a_1 a_3 \\
-2 a_1 & 2 a_2 & a_1 a_2 + 3 a_3 \\
3 & a_1 & a_1^2 + 2 a_2
\end{bmatrix}
\]

\[ G_4 =
\begin{bmatrix}
-a_3 & 4 a_4 & a_1 a_4 & a_1^2 a_4 + 2 a_2 a_4 \\
-2 a_2 & 3 a_3 & a_1 a_3 + 4 a_4 & a_1^2 a_3 + 2 a_2 a_3 + a_1 a_4\\
-3 a_1 & 2 a_2 & a_1 a_2 + 3 a_3 & a_1^2 a_2 + 2 a_2^2 + a_1 a_3 + 4 a_4\\
4 & a_1 & a_1^2 + 2 a_2 & a_1^3 + 3 a_1 a_2 + 3 a_3
\end{bmatrix}
\]
\\
The relatively simple conclusions that we drew about the $n=2$ case in Subsection \ref{subsec: neck-cutting in universal sln} follow directly from the fact that:
\[ (G_2)^2 =
\begin{bmatrix}
a_1^2 + 4 a_2 & 0 \\
0 & a_1^2 + 4 a_2
\end{bmatrix}
=
(a_1^2 + 4 a_2)*E_2 
\]
And hence that:
\[
(G_2)^{2k} =
\begin{bmatrix}
(a_1^2 + 4 a_2)^k & 0 \\
0 & (a_1^2 + 4 a_2)^k
\end{bmatrix}
\]
\[
(G_2)^{2k+1} =
\begin{bmatrix}
-a_1 (a_1^2 + 4 a_2)^k & 2 a_2 (a_1^2 + 4 a_2)^k \\
2 (a_1^2 + 4 a_2)^k & a_1 (a_1^2 + 4 a_2)^k
\end{bmatrix}
\]

Now recall our complete factorization of $p(x)$ over $\C$ as $p(x)=x^n-a_a x^{n-1}-...-a_{n-1}x-a_n = \prod_{i=1}^n(x+\alpha_i)$, which provides for the identification of $a_k$ with the (negative of the) k\textsuperscript{th} elementary syymetric polynomial $e_k$ in the $\alpha_i$.  When hoping to rewrite our matrices $G_n$ is terms of the $\alpha_i$, we require more general symmetric polynomials than the elementary ones.  Hence we introduce the monomial symmetric polynomials, with $m_{(k_1...k_n)}$ standing for the sum of all monomials in the $\alpha_i$ of the form $\alpha_{i_2}^{k_2}...\alpha_{i_n}^{k_n}$.  Note that we have as special subcases the elementary symmetric polynomials $e_k = m_{(1^k 0^{n-k})} = m_{(1^k)}$, where the $1^k$ indicates $k$ consecutive 1's and we traditionally drop any trailing 0's for brevity.  In this notation we also have the ``power" symmetric polynomials $p_k = m_{(k^1)}$.

We may then quickly rewrite the first several $G_n$ from above:

\[ G_2 =
\begin{bmatrix}
m_{(1^1)} & -2 m_{(1^2)} \\
2 & -m_{(1^1)}
\end{bmatrix}
\]

\[ G_3 =
\begin{bmatrix}
m_{(1^2)} & -3 m_{(1^3)} & m_{(2^1 1^2)} \\
2 m_{(1^1)} & -2 m_{(1^2)} & m_{(2^1 1^1)} \\
3 & -m_{(1^1)} & m_{(2^1)}
\end{bmatrix}
\]

\[ G_4 =
\begin{bmatrix}
m_{(1^3)} & -4 m_{(1^4)} & m_{(2^1 1^3)} & -m_{(3^1 1^3)}\\
2 m_{(1^2)} & -3 m_{(1^3)} & m_{(2^1 1^2)} & -m_{(3^1 1^2)}\\
3 m_{(1^1)} & -2 m_{(1^2)} & m_{(2^1 1^1)} & -m_{(3^1 1^1)}\\
4 & -m_{(1^1)} & m_{(2^1)} & -m_{(3^1)}
\end{bmatrix}
\]

There's an obvious pattern for $G_n$ that begins to emerge here, and that pattern becomes especially simple following the first two columns.  To show that this pattern holds for all $n \geq 2$ we require the following basic properties of symmetric polynomials, all of which are directly verifiable:

\begin{lemma}
\label{thm: symmetric polynomial relations}
For $p_a = m_{(a^1)}$ and $e_b = m_{(1^b)}$ in $n$ variables, we have the following relations:
\begin{enumerate}
\item $p_a e_b = m_{((a+1)^1 1^{n-1})}$ (for $b=n$)
\item $p_a e_b = m_{(2^1 1^{b-1})}+(b+1)m_{(1^{b+1})}$ (for $a=1$ and $b<n$)
\item $p_a e_b = m_{((a+1)^1 1^{b-1})}+m_{(a^1 1^b)}$ (for $a >1$ and $b<n$)
\end{enumerate}
\end{lemma}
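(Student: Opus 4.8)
The plan is to verify each of the three identities in Lemma~\ref{thm: symmetric polynomial relations} directly by expanding the product $p_a e_b$ and collecting monomials according to their shape. Recall that $p_a = m_{(a^1)} = \sum_i \alpha_i^a$ and $e_b = m_{(1^b)} = \sum_{i_1 < \dots < i_b} \alpha_{i_1}\cdots\alpha_{i_b}$. Multiplying these two sums term by term, a generic summand is $\alpha_i^a \cdot \alpha_{i_1}\cdots\alpha_{i_b}$, and the key bookkeeping question is simply whether the index $i$ coincides with one of the $b$ indices in the $e_b$-monomial or not. This dichotomy is what produces the two terms appearing on the right-hand side of each identity, and dealing with it cleanly is really the only content of the proof.

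First I would treat case~(3), with $a>1$ and $b<n$, as the generic case. If $i \notin \{i_1,\dots,i_b\}$, the product is a monomial with one exponent equal to $a$ and $b$ exponents equal to $1$ (on $b+1$ distinct variables), i.e.\ a monomial of shape $(a^1 1^b)$; summing all such gives $m_{(a^1 1^b)}$, and I must check that each monomial of that shape is hit exactly once (it is, since $a>1$ forces the role of the variable carrying exponent $a$ to be unambiguous). If instead $i = i_k$ for some $k$, the product has one exponent $a+1$ and $b-1$ exponents equal to $1$, giving shape $((a+1)^1 1^{b-1})$; again $a+1>1$ makes the distinguished variable unambiguous, so summing yields exactly $m_{((a+1)^1 1^{b-1})}$. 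Adding the two contributions gives identity~(3). For case~(1), with $b=n$, the monomial of $e_n$ already uses every variable, so the overlap case always occurs and the non-overlap case is vacuous; this collapses (3) to $p_a e_n = m_{((a+1)^1 1^{n-1})}$, which is identity~(1) (and here $a$ may be $1$ without trouble since all other exponents are forced to be $1$). For case~(2), with $a=1$ and $b<n$, the non-overlap case now produces monomials of shape $(1^{b+1})$, but since all exponents are $1$ the monomial $m_{(1^{b+1})}$ is obtained with multiplicity $b+1$ rather than $1$ — each of the $b+1$ variables in such a monomial could have been the one contributed by $p_1$ — which accounts for the coefficient $b+1$; the overlap case produces shape $(2^1 1^{b-1})$ with multiplicity one, giving $m_{(2^1 1^{b-1})}$. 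Summing yields identity~(2).

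I do not expect any genuine obstacle here: all three identities are ``directly verifiable'' as the statement already notes, and the only subtlety is the multiplicity argument in case~(2), where one must be careful that the coefficient is $b+1$ and not $1$. It would be worth spelling out explicitly why the distinguished-variable argument fails precisely when $a=1$ (because then the exponent $a=1$ of that variable is indistinguishable from the exponents $1$ coming from $e_b$), so that the reader sees why cases~(2) and~(3) must be separated. The boundary case $b=n$ is handled by observing that $e_n$ is the single monomial $\alpha_1\cdots\alpha_n$, so the ``non-overlap'' possibility simply cannot arise. Beyond that the argument is a short combinatorial check and can be presented in a few lines.
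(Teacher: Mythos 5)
Your proof is correct and is exactly the kind of direct expand-and-collect verification the paper has in mind when it calls these identities ``directly verifiable'' (the paper itself gives no proof of this lemma). The case analysis on whether the index carrying $\alpha_i^a$ collides with the $e_b$-index set, including the multiplicity-$(b+1)$ count when $a=1$ and the vacuity of the non-overlap case when $b=n$, is precisely the bookkeeping needed and you have it right.
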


\begin{proposition}
\label{thm: symmetric polynomial matrix}
For any $n \geq 2$, $G_n$ is of the form:\\
\[ G_n =
\begin{bmatrix}
m_{(1^{n-1})} & -n m_{(1^n)} & m_{(2^1 1^{n-1})} & -m_{(3^1 1^{n-1})} & \ldots & (-1)^{n-1} m_{((n-1)^1 1^{n-1})}\\
2m_{(1^{n-2})} & -(n-1) m_{(1^{n-1})} & m_{(2^1 1^{n-2})} & -m_{(3^1 1^{n-2})} & \ldots & (-1)^{n-1} m_{((n-1)^1 1^{n-2})}\\
\vdots & \vdots & \vdots & \vdots & \vdots & \vdots \\
(n-1)m_{(1^1)} & -2 m_{(1^2)} & m_{(2^1 1^1)} & -m_{(3^1 1^1)} & \ldots & (-1)^{n-1} m_{((n-1)^1 1^1)}\\
n & -m_{(1^1)} & m_{(2^1)} & -m_{(3^1)} & \ldots & (-1)^{n-1} m_{((n-1)^1)}
\end{bmatrix}
\]
\end{proposition}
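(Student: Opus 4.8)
The plan is to prove the claimed form column by column, by induction on the column index $j$, using the recursion for the entries $g_{i,j}$ from Proposition~\ref{thm: Gn recursive formula} together with the identities of Lemma~\ref{thm: symmetric polynomial relations}, and making the substitution $a_k = -e_k = -m_{(1^k)}$ throughout.

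First I would check the two ``irregular'' columns directly. For $j=1$, Proposition~\ref{thm: Gn recursive formula} gives $g_{i,1} = -i\,a_{n-i} = i\,m_{(1^{n-i})}$ for $i<n$ and $g_{n,1}=n$, which is the first column as displayed. For $j=2$ the recursion reads $g_{i,2} = a_{n-i+1}g_{n,1} + g_{i-1,1} = n\,a_{n-i+1} + g_{i-1,1}$ for $i>1$, and substituting $a_{n-i+1} = -m_{(1^{n-i+1})}$ and $g_{i-1,1} = (i-1)m_{(1^{n-i+1})}$ collapses this to $-(n-i+1)m_{(1^{n-i+1})}$; the top entry $g_{1,2} = a_n g_{n,1} = -n\,m_{(1^n)}$ is the $i=1$ case. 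This matches the second column.

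For the inductive step I would fix $j\ge 3$, assume the stated form for column $j-1$ (equivalently, its explicit value when $j=3$), and apply $g_{i,j} = a_{n-i+1}g_{n,j-1} + g_{i-1,j-1}$ for $i>1$ and $g_{1,j} = a_n g_{n,j-1}$ for $i=1$. The bottom entry of the previous column is $g_{n,j-1} = -m_{(1^1)} = -p_1$ when $j=3$ and $g_{n,j-1} = (-1)^{j-2}m_{((j-2)^1)} = (-1)^{j-2}p_{j-2}$ when $j\ge 4$, so in either case $a_{n-i+1}g_{n,j-1}$ is, up to a global sign, a product $p_a e_b$ with $a\in\{1,\,j-2\}$ and $b = n-i+1$. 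When $j=3$ I expand $p_1 e_{n-i+1}$ by Lemma~\ref{thm: symmetric polynomial relations}(2), whose ``elementary'' term $(n-i+2)m_{(1^{n-i+2})}$ cancels precisely against $g_{i-1,2} = -(n-i+2)m_{(1^{n-i+2})}$, leaving $m_{(2^1 1^{n-i})}$. When $j\ge 4$ I expand $p_{j-2}e_{n-i+1}$ by Lemma~\ref{thm: symmetric polynomial relations}(3) as $m_{((j-1)^1 1^{n-i})} + m_{((j-2)^1 1^{n-i+1})}$, note that the second summand cancels against $g_{i-1,j-1} = (-1)^{j-2}m_{((j-2)^1 1^{n-i+1})}$ from the inductive hypothesis, and obtain $g_{i,j} = (-1)^{j-1}m_{((j-1)^1 1^{n-i})}$. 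The top row $i=1$ is checked separately: $g_{1,j} = a_n g_{n,j-1} = (-1)^{j-1}p_{j-2}e_n = (-1)^{j-1}m_{((j-1)^1 1^{n-1})}$ by Lemma~\ref{thm: symmetric polynomial relations}(1).

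There is no genuine obstacle here; the entire argument is bookkeeping, and the places that need care are exactly the ``edge'' cases: the first two columns truly do not follow the generic pattern and must be handled by hand; within the recursion the top row $i=1$ obeys a different rule and must be verified on its own; and the boundary values $b=n$ (which forces Lemma~\ref{thm: symmetric polynomial relations}(1)) and $b=1$ (where $m_{((j-2)^1 1^0)} = p_{j-1}$) sit at the ends of the index ranges, so one should confirm that the telescoping cancellation really holds for every entry, including the bottom row. Once these are pinned down the induction closes at once.
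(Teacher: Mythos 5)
Your proof is correct and follows essentially the same route as the paper's: verify columns $j=1,2$ by hand from Proposition~\ref{thm: Gn recursive formula}, use $j=3$ as an inductive base (where Lemma~\ref{thm: symmetric polynomial relations}(2) produces the cancellation), and for $j\ge 4$ expand $a_{n-i+1}g_{n,j-1}$ via Lemma~\ref{thm: symmetric polynomial relations}(3) and telescope against $g_{i-1,j-1}$, treating the top row $i=1$ separately via Lemma~\ref{thm: symmetric polynomial relations}(1). One minor slip in your closing aside: for $b=1$ the surviving term is $m_{((j-1)^1 1^0)}=p_{j-1}$, not $m_{((j-2)^1 1^0)}$ (which is $p_{j-2}$), but this is a throwaway remark and the main cancellation is stated correctly; you also correctly get $g_{1,2}=-n\,m_{(1^n)}$, where the paper's written proof has a small typo ($-n\,m_{(1^1)}$).
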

\begin{proof}
We use the recursive relations proven in Proposition \ref{thm: Gn recursive formula}. As the pattern stabilizes beginning with the third column, we use those relations to directly verify the entries of columns $j=1$ and $j=2$, and then use induction for columns $j \geq 3$.\\
Column $j=1$:\\
$g_{i,1} = -i a_{n-i}=i e_{n-i} = i m_{(1^{n-i})}$ (for $i<n$)\\
$g_{n,1} = n$\\
Column $j=2$:\\
$g_{1,2} = a_n g_{n,1} = n a_n = -n e_n = -n m_{(1^1)}$\\
$g_{i,2} = a_{n-i+1} g_{n,1} + g_{i-1,1} = -n e_{n-i+1} + (i-1) e_{n-i+1} = -(n-i+1)e_{n-i+1}$ (for $i>1$)\\
Column $j=3$ (inductive base step), noting that $g_{n,2} = -p_1$:\\
$g_{1,3} = a_n g_{n,2} = -e_n (-p_1) = m_{(2^1 1^{n-1})}$ by Lemma \ref{thm: symmetric polynomial relations}(1)\\
$g_{i,3} = a_{n-i+1} g_{n,2} + g_{i-1,2} = -e_{n-i+1} (-p_1 ) + -(n-i+2) e_{n-i+2} = m_{(2^1 1^{n-i})} + (n-i+2) m_{(1^{n-i+2})} - (n-i+2) m_{(1^{n-i+2})} = m_{(2^1 1^{n-i})}$ by Lemma \ref{thm: symmetric polynomial relations}(2) (for $i>1$)\\
Inductive step (assume pattern holds for column $k$), noting that $g_{n,k} = (-1)^{k-1} p_{k-1}$:\\
$g_{1,k+1} = a_n g_{n,k} = -e_n (-1)^{k-1} p_{k-1} = (-1)^k m_{(k^1 1^{n-1})}$ by Lemma \ref{thm: symmetric polynomial relations}(1)\\
$g_{i,k+1} = a_{n-i+1} g_{n,k} + g_{i-1,k} = -e_{n-i+1} (-1)^{k-1} p_{k-1} +(-1)^{k-1} m_{((k-1)^1 1^{n-i+1})} = (-1)^k m_{(k^1 1^{n-i})} + (-1)^k m_{((k-1)^1,1^{n-i+1})} + (-1)^{k-1} m_{((k-1)^1 1^{n-i+1})} = (-1)^k m_{(k^1 1^{n-i})}$ by Lemma \ref{thm: symmetric polynomial relations}(3)\\
\end{proof}











\nocite{*}

\bibliographystyle{plain}
\bibliography{GeneralizedSkeinModulesofSurfaces}

\end{document}